\documentclass[11pt,reqno]{amsart}
\usepackage[a4paper,margin=28mm]{geometry}
\usepackage[T1]{fontenc}
\usepackage{lmodern}
\usepackage{amsmath,amssymb,amsthm,mathtools}
\usepackage[expansion=false]{microtype}
\usepackage[colorlinks=true,linkcolor=blue,citecolor=blue,urlcolor=blue]{hyperref}

\newtheorem{theorem}{Theorem}[section]
\newtheorem{lemma}[theorem]{Lemma}
\newtheorem{proposition}[theorem]{Proposition}
\newtheorem{corollary}[theorem]{Corollary}
\theoremstyle{remark}

\numberwithin{equation}{section}

\newcommand{\R}{\mathbb{R}}
\newcommand{\C}{\mathbb{C}}
\newcommand{\Z}{\mathbb{Z}}
\newcommand{\mcS}{\mathcal S}
\newcommand{\md}[1]{\mathrm{d}#1}
\newcommand{\Fs}{\mathcal F_s}
\newcommand{\B}{\mathcal B}
\newcommand{\OpBJ}{\operatorname{Op}_{BJ}}

\title[Endpoint bounds for the Born-Jordan distribution]
{Endpoint bounds for the Born-Jordan distribution}

\author{Yaojun Wang}
\address[Yaojun Wang ] {Mathematisches Institut, Ludwig-Maximilians-Universit\"at M\"unchen,
	Theresienstr. 39, 80333 M\"unchen, Germany}
\email{yaojun.wang@math.lmu.de}

\date{}
\subjclass[2020]{Primary 42B10; Secondary 42B35, 47G30}
\keywords{Born-Jordan distribution, endpoint estimate, rearrangement inequality,
Gaussian functions, complex interpolation}

\begin{document}

\begin{abstract}
	For $d\geqslant3$, let $p^*=\frac{2d}{d-2}$. We prove that the
	Born-Jordan distribution maps $L^{2,p^*}(\R^d)\times L^{2,p^*}(\R^d)$
	boundedly into $L^{p^*}(\R^{2d})$. The Lorentz index $p^*$ cannot
	be increased in either variable. In particular, this gives the
	endpoint $L^2\times L^2\to L^{p^*}$ estimate, settling the critical boundedness problem raised by Stra, Svela, and Trapasso \cite{SST26}.
\end{abstract}

\maketitle

\section{Introduction}\label{sec:introduction}

A signal with finite energy is modelled by a function $f\in L^2(\R^d)$.
Its Fourier transform describes its frequency content, but does not
directly show where these frequencies occur in time or space. In signal
processing, it is therefore useful to associate with $f$ a representation
on the phase space $\R^d_x\times\R^d_\xi$, which describes position and
frequency together; see \cite{Coh89,HB92}. Once such a representation is
chosen, one can ask about its regularity and integrability, and how
strongly it can concentrate in a given region of phase space. We are
interested here in integrability bounds in terms of the signal energy.

Different phase-space representations have different advantages and
disadvantages. The spectrogram, which is the squared modulus of the
short-time Fourier transform, is nonnegative, but its time and frequency
resolution depends on the chosen window \cite{HB92}. The Wigner distribution does
not require a window and has the correct position and frequency
marginals for sufficiently regular signals. However, it may take
negative values, and superpositions of signals produce oscillatory
cross-terms \cite{Coh89}. Cohen's class \cite{Coh66} consists of quadratic
representations obtained by convolving the Wigner distribution with a
kernel. The choice of kernel affects the interference patterns and the
analytic properties of the representation.

In this paper we study the Born-Jordan distribution. The related
quantization rule was introduced by Born and Jordan in 1925
\cite{BJ25}. The associated phase-space distribution appeared in
Cohen's framework \cite{Coh66}. In time-frequency
analysis, Boggiatto, De Donno, and Oliaro \cite{BDO10} studied the
average of the $\tau$-Wigner distributions over $[0,1]$, which is the Born-Jordan distribution. It retains the position and frequency
marginals and reduces certain interference patterns. Cordero,
de Gosson, and Nicola \cite{CDN18} proved that the smoothing depends
on the direction in phase space. For a survey of Born-Jordan-type
distributions and their use in signal analysis, see \cite{CDGN21}.

In the setting of modulation spaces, Cordero and Nicola \cite{CN18}
obtained sharp bounds for the cross-Wigner distribution, with
applications to Cohen's class, including the Born-Jordan distribution.
Cordero, de Gosson, and Nicola \cite{CDN17} also established
boundedness results for Born-Jordan pseudodifferential operators
on modulation spaces.
We focus here on global $L^p$ estimates for the Born-Jordan distribution.

For $f,g\in\mcS(\R^d)$ and $0\leqslant\tau\leqslant1$, the
cross-$\tau$-Wigner distribution is defined by
\begin{equation}\label{eq:wtau}
 W_\tau(f,g)(x,\xi)
 =\int_{\R^d}e^{-2\pi i\xi\cdot y}
 f(x+\tau y)\overline{g(x-(1-\tau)y)}\md{y}.
\end{equation}
The choice $\tau=\frac{1}{2}$ recovers the usual cross-Wigner
distribution. Averaging over the parameter $\tau$ yields the Born-Jordan distribution,
\begin{equation*}
 W_{BJ}(f,g)=\int_0^1W_\tau(f,g)\md{\tau}.
\end{equation*}
For general $f,g\in L^2(\R^d)$, this average is well-defined as an
$L^2(\R^{2d})$-valued integral. Moyal's identity
$\|W_\tau(f,g)\|_2=\|f\|_2\|g\|_2$ then yields the basic estimate
$\|W_{BJ}(f,g)\|_2\leqslant\|f\|_2\|g\|_2$.

Lieb's sharp inequality \cite{Lie90} provides the starting point for
$L^p$ estimates. After a change of variables, it yields
\begin{equation}\label{eq:lieb-tau}
 \|W_\tau(f,g)\|_p
 \leqslant\left(\frac{2}{p}\right)^{\frac{d}{p}}
 [\tau(1-\tau)]^{d(\frac{1}{p}-\frac{1}{2})}
 \|f\|_2\|g\|_2,
 \qquad 2\leqslant p<\infty,\quad 0<\tau<1.
\end{equation}
The constant in \eqref{eq:lieb-tau} is optimal; for $2<p<\infty$ and
each fixed $\tau\in(0,1)$, equality is attained by suitable Gaussian
pairs.
For $d\geqslant3$, combining \eqref{eq:lieb-tau} with Minkowski's
inequality gives the global $L^2\times L^2\to L^p$ bound in the
subcritical range $2\leqslant p<p^*=\frac{2d}{d-2}$, where the
parameter weight is integrable. At the critical exponent $p=p^*$,
this weight becomes $[\tau(1-\tau)]^{-1}$, and the resulting upper
bound diverges logarithmically at both endpoints of the parameter
interval. Thus this direct argument does not reach the endpoint.

The same critical exponent appears in the concentration problem
studied by Stra, Svela, and Trapasso \cite{SST26}, namely
\begin{equation*}
 \sup_{\|f\|_2=1}\|W_{BJ}(f,f)\|_{L^p(\Omega)},
 \qquad \Omega\subset\R^{2d},\quad 0<|\Omega|<\infty.
\end{equation*}
For $d\geqslant3$, they established finiteness and attainment when
$1\leqslant p<p^*$, and unboundedness when $p>p^*$.
The critical case $p=p^*$ was left open; in fact, the authors suggested
that the supremum should also be infinite at this threshold
\cite[Section 1.1, after Theorem 1.7]{SST26}.
We prove that it is finite by establishing a strong-type
estimate at $p=p^*$. In fact, the estimate extends from $L^2(\R^d)$
to the Lorentz space $L^{2,p^*}(\R^d)$ in each variable, as follows.

\begin{theorem}[Endpoint Lorentz estimate]\label{thm:main}
	Let $d\geqslant3$ and $p^*=\frac{2d}{d-2}$. There exists a constant $C_d$ depending only on $d$ such that for every $f,g\in L^{2,p^*}(\R^d)$,
	\begin{equation}\label{eq:lorentz-endpoint}
		\|W_{BJ}(f,g)\|_{L^{p^*}(\R^{2d})}
		\leqslant C_d\|f\|_{2,p^*}\|g\|_{2,p^*}.
	\end{equation}
	The Lorentz index $p^*$ cannot be increased in either variable.
	
	In particular, for every $f,g\in L^2(\R^d)$, we have the endpoint
	estimate
	\begin{equation*}
		\|W_{BJ}(f,g)\|_{p^*}\leqslant C_d\|f\|_2\|g\|_2.
	\end{equation*}
\end{theorem}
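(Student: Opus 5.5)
The plan is to avoid Minkowski in $\tau$ altogether, since that is exactly what produces the logarithmic divergence of \eqref{eq:lieb-tau} at $p=p^*$. Instead I will exploit the fact that every $W_\tau$ is a Fourier transform in the \emph{same} variable $y$. Indeed, for $f,g\in\mcS(\R^d)$,
\begin{equation*}
 W_{BJ}(f,g)(x,\xi)=\int_{\R^d}e^{-2\pi i\xi\cdot y}G(x,y)\md{y},\qquad
 G(x,y)=\int_0^1 f(x+\tau y)\overline{g(x-(1-\tau)y)}\md{\tau}.
\end{equation*}
Since $p^*>2$, the Hausdorff--Young inequality in $y$, applied for fixed $x$, gives
\begin{equation*}
 \|W_{BJ}(f,g)\|_{L^{p^*}(\R^{2d})}\leqslant\Bigl(\int_{\R^d}\|G(x,\cdot)\|_{L^{q}_y}^{p^*}\md{x}\Bigr)^{1/p^*},\qquad q=(p^*)'=\tfrac{2d}{d+2}.
\end{equation*}
This reduces the problem to a positive multilinear averaging estimate. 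We have $|G(x,y)|\leqslant\int_0^1|f(x+\tau y)||g(x-(1-\tau)y)|\md\tau$, which is an average of $|f(u)||g(v)|$ over the segment from $u=x+y$ to $v=x-y+y$, with $u-v=y$. So the whole statement becomes a mixed-norm $L^{p^*}_xL^{q}_y$ bound for a bilinear segment-averaging operator applied to $|f|,|g|$. The general case $f,g\in L^{2,p^*}$ then follows by density and the positivity of this bound.

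For the mixed-norm bound, my first step is to symmetrize. Writing the $L^q_y$ norm by duality against $h(x,y)\geqslant0$, the quantity becomes a multilinear integral of $|f(x+\tau y)||g(x-(1-\tau)y)|h(x,y)$. I would then use a Brascamp--Lieb/Riesz-type rearrangement inequality to replace $|f|,|g|$ by their symmetric decreasing rearrangements. In the radial decreasing setting, I would decompose $f^*=\sum_j f_j$ and $g^*=\sum_k g_k$ into dyadic layers. Here $f_j$ is adapted to the ball where $f^*\sim 2^j$, or equivalently to dyadic radii $|u|\sim 2^{-m}$. The contribution of a pair of layers at radii $2^{-m},2^{-n}$ should be estimated explicitly. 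The key fact should be that the segment geometry forces the $\tau$-parameter to be comparable to a ratio like $2^{-|m-n|}$. The off-diagonal pairs then gain geometric decay, except for the borderline scaling; this is precisely the loss visible in \eqref{eq:lieb-tau}. I expect the resulting bound to have the form
\begin{equation*}
 \|W_{BJ}(f,g)\|_{p^*}^{p^*}\lesssim\sum_{m,n}\bigl(a_m b_n\bigr)^{p^*}\cdot(\text{decaying kernel in }m-n)\ \text{plus a diagonal term},
\end{equation*}
where $a_m$ is the $L^2$-mass of $f^*$ on the shell $|u|\sim2^{-m}$, and similarly for $b_n$. Summation is then an $\ell^{p^*}$ Young/Schur estimate. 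Since $\|(a_m)\|_{\ell^{p^*}}\sim\|f\|_{2,p^*}$ for the layer decomposition of a Lorentz function, this gives \eqref{eq:lorentz-endpoint}. The $L^2$ case follows from $L^2=L^{2,2}\subset L^{2,p^*}$.

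The main obstacle is the second step: making the rearrangement rigorous for this degenerate, parameter-dependent multilinear form, and then computing the shell interactions sharply enough. One has to check that the logarithm coming from $\int_0^1[\tau(1-\tau)]^{-1}\md\tau$ is exactly absorbed into an $\ell^{p^*}$ sum over scales, and not merely into an $\ell^1$ sum. If the rearrangement proves troublesome, my fallback is bilinear real/complex interpolation. This would use the family of bounds at $p\neq p^*$ for dyadic $\tau$-pieces $\int_{2^{-k-1}}^{2^{-k}}W_\tau\md\tau$, together with Gaussian computations to pin down the constants, to produce the Lorentz estimate directly.

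For sharpness, I would test the estimate with radial functions
\begin{equation*}
 f(u)=|u|^{-d/2}\,\bigl(\log(1/|u|)\bigr)^{-\beta}\,\mathbf 1_{|u|<1/2},
\end{equation*}
which lie in $L^{2,r}$ exactly when $\beta r>1$, paired with a fixed Gaussian $g$. Near $\tau\approx0$, $W_\tau(f,g)$ reproduces $f$ at all small scales simultaneously, so the dyadic shells contribute essentially independently to the $L^{p^*}$ norm. The norm $\|W_{BJ}(f,g)\|_{p^*}^{p^*}$ then behaves like $\sum_m m^{-\beta p^*}$. Choosing $1/r<\beta\leqslant 1/p^*$ with $r>p^*$ gives divergence, which shows that the index $p^*$ cannot be raised in either variable, by the symmetry $f\leftrightarrow g$, $\tau\leftrightarrow1-\tau$.
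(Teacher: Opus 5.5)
There is a genuine gap in your first step. Applying Hausdorff--Young in $y$ for fixed $x$ turns the theorem into an inequality that is false, already for $L^2$ data. After that step you must prove
\[
\|G\|_{L^{p^*}_xL^{q}_y}\lesssim\|f\|_{2,p^*}\|g\|_{2,p^*},\qquad q=\tfrac{2d}{d+2},
\]
for nonnegative $f,g$.

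Take $g=1_{B(0,1)}$ and $f_L(u)=|u|^{-d/2}1_{\{1<|u|<2^L\}}$. Then $\|f_L\|_{2,p^*}\simeq L^{1/p^*}$ and $\|f_L\|_2\simeq L^{1/2}$. Fix $x$ with $10\leqslant|x|\leqslant20$, and let $y$ satisfy $2|x|\leqslant|y|\leqslant2^{L-1}$ and $\angle(x,y)\leqslant\frac1{2|x|}$.
\begin{itemize}
\item The point $x-(1-\tau)y$ lies in $B(0,1)$ for $\tau$ in an interval of length $1/|y|$.
\item On that interval $\bigl||x+\tau y|-|y|\bigr|\leqslant1$, so $G(x,y)\gtrsim|y|^{-1-\frac d2}$.
\end{itemize}
Since $q(1+\frac d2)=d$, every dyadic shell $|y|\sim\rho$ of this cone contributes equally:
\[
\|G(x,\cdot)\|_{L^q}^q\gtrsim\int_{2|x|}^{2^{L-1}}\frac{\md{\rho}}{\rho}\gtrsim L .
\]
Hence $\|G\|_{L^{p^*}_xL^q_y}\gtrsim L^{1/q}=L^{\frac12+\frac1d}$. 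This exceeds both $\|f_L\|_{2,p^*}\|g\|_{2,p^*}\simeq L^{1/p^*}$ and $\|f_L\|_2\|g\|_2\simeq L^{1/2}$.

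So no rearrangement or shell computation can rescue the plan. In particular, the geometric decay you expect between the scales of $f$ and $g$ is false for this mixed norm: the pair ($g$ at scale $1$, $f$ at scale $2^m$) contributes the same amount for every $m$. The reason is that $L^q_y$ with $q<2$ adds the pieces of $G(x,\cdot)$ on different shells $|y|\sim\rho$ in $\ell^q$. Their Fourier transforms, however, live at different frequency scales $\sim\rho^{-1}$ and add in $\ell^{p^*}$. Hausdorff--Young discards exactly the oscillation that makes the scales almost orthogonal. Separately, rearranging a dual function $h(x,y)$ on $\R^{2d}$ together with $f,g$ is not covered by Lemma~\ref{lem:bll}.

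The paper keeps the oscillation but makes it positive. For an even integer $q$, it expands $|Q|^q$ and integrates out $\xi$ against a Gaussian regularization. This turns $\|\cdot\|_q^q$ exactly, with no Hausdorff--Young loss, into a nonnegative multilinear form in $|f|,|g|$ with kernel $H_\delta(\sum_j\sigma_jy_j)$. All arguments are linear in $(x,y_1,\dots,y_q)$, so Lemma~\ref{lem:bll} applies. Gaussian majorants then give the critically weighted bound for $\int_0^{1/2}\tau^{\alpha-1}W_\tau\,\md{\tau}$ from $L^{2,q}\times L^{2,q}$ to $L^q$. Since $p^*$ is in general not an even integer, this is done at $q=8$. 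The result is then carried to $p^*$ by complex interpolation in the family $az\int_0^{1/2}\tau^{az-1}W_\tau\,\md{\tau}$, using the $L^2$ multiplier bound on $\mathrm{Re}z=0$.

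Your fallback (interpolating bounds for dyadic $\tau$-pieces) is too vague to assess. Summing dyadic pieces this way typically yields only $L^{p^*,s}$ bounds with $s>p^*$, not the strong-type estimate.

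Your sharpness test is in the spirit of the paper's truncated powers $f_N$, but it needs two repairs:
\begin{itemize}
\item It needs a rigorous pointwise lower bound. The paper takes $g=1_{B(0,1)}$, so that $\cos(2\pi\xi\cdot y)\geqslant\frac12$ for $|\xi|\leqslant\frac1{12}$; with a Gaussian $g$ this positivity is not automatic.
\item Your $f\notin L^2$ when $\beta\leqslant1/p^*$, so you must argue through truncations.
\end{itemize}
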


Theorem \ref{thm:main} settles the critical boundedness problem with a
constant independent of $\Omega$. Together with the supercritical
examples in \cite{SST26}, it identifies $p^*$ as the largest exponent
$p\geqslant2$ for which a global $L^2\times L^2\to L^p$ estimate can
hold.

Theorem \ref{thm:main} also yields an endpoint estimate for
Born-Jordan pseudodifferential operators. Recall that, for a symbol $a\in L^r(\R^{2d})$,
$1\leqslant r\leqslant\infty$, the Born-Jordan operator is
defined weakly by (see \cite[Section 6]{BDO10} and \cite[(4)]{CDN17})
\begin{equation*}
	\langle\OpBJ(a)f,g\rangle
	:=\int_{\R^{2d}}a(X)\overline{W_{BJ}(g,f)(X)}\md{X},
	\qquad f,g\in\mcS(\R^d).
\end{equation*}
For $d\geqslant3$, Boggiatto, De Donno, and Oliaro \cite[Theorem 6.10]{BDO10} proved $L^2$-boundedness for symbols
in $L^r(\R^{2d})$ when $\frac{2d}{d+2}<r\leqslant2$.
The following corollary treats the left endpoint, which was left open
in \cite[Appendix B]{SST26}.

\begin{corollary}\label{cor:bj-operator}
	Let $d\geqslant3$ and $q^*=(p^*)'=\frac{2d}{d+2}$.
	For every $a\in L^{q^*}(\R^{2d})$, the operator $\OpBJ(a)$
	has a unique bounded extension to $L^2(\R^d)$, with
	\begin{equation*}
		\|\OpBJ(a)\|_{\B(L^2(\R^d))}
		\leqslant C_d\|a\|_{L^{q^*}(\R^{2d})},
	\end{equation*}
	where $C_d$ depends only on $d$.
\end{corollary}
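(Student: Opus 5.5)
The corollary follows from Theorem \ref{thm:main} by duality. I first prove the bound on Schwartz functions and then extend by density.

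Fix $a\in L^{q^*}(\R^{2d})$ and $f,g\in\mcS(\R^d)$. Since $W_{BJ}(g,f)\in L^{p^*}(\R^{2d})$ by Theorem \ref{thm:main}, the defining integral converges absolutely by Hölder's inequality with the conjugate pair $(q^*,p^*)$. Applying the $L^2\times L^2$ case of Theorem \ref{thm:main} then gives
\begin{equation*}
 |\langle\OpBJ(a)f,g\rangle|
 \leqslant\|a\|_{q^*}\|W_{BJ}(g,f)\|_{p^*}
 \leqslant C_d\|a\|_{q^*}\|f\|_2\|g\|_2 .
\end{equation*}
Thus the sesquilinear form $(f,g)\mapsto\langle\OpBJ(a)f,g\rangle$ is bounded on $\mcS\times\mcS$ in the $L^2\times L^2$ norm. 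Because $\mcS(\R^d)$ is dense in $L^2(\R^d)$, the form extends uniquely to a bounded sesquilinear form on $L^2\times L^2$ with the same bound. The Riesz representation theorem then yields a unique bounded operator on $L^2(\R^d)$ of norm at most $C_d\|a\|_{q^*}$.

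On $\mcS$ this operator agrees with $\OpBJ(a)$ in the weak sense, and any bounded extension is determined by its values on the dense subspace $\mcS$. This gives both existence and uniqueness of the extension.

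One point needs care: the extended form must be given by the same integral formula for all $f,g\in L^2$, not only for Schwartz functions. This holds because $W_{BJ}$ is continuous as a map $L^2\times L^2\to L^{p^*}$. Indeed, by sesquilinearity,
\begin{equation*}
 W_{BJ}(g,f)-W_{BJ}(g_n,f_n)=W_{BJ}(g-g_n,f)+W_{BJ}(g_n,f-f_n),
\end{equation*}
and Theorem \ref{thm:main} bounds each term by $C_d\|\cdot\|_2\|\cdot\|_2$. Hence if $f_n\to f$ and $g_n\to g$ in $L^2$, then $W_{BJ}(g_n,f_n)\to W_{BJ}(g,f)$ in $L^{p^*}$, and the integrals converge by Hölder.

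There is no real obstacle beyond Theorem \ref{thm:main}. The only subtlety is the well-definedness of $W_{BJ}(f,g)$ for $f,g\in L^2$: it is defined as an $L^2(\R^{2d})$-valued integral, whereas the bound above concerns its $L^{p^*}$ norm. These are compatible because the $L^2$ and $L^{p^*}$ limits of the same sequence coincide almost everywhere, after passing to an a.e.-convergent subsequence.
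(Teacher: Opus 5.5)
Your proposal is correct and follows the paper's proof: Hölder's inequality with the pair $(q^*,p^*)$ combined with the $L^2\times L^2\to L^{p^*}$ case of Theorem~\ref{thm:main} on Schwartz functions, then extension of the bounded sesquilinear form by density and the Riesz representation theorem. Your extra remark on continuity of $W_{BJ}\colon L^2\times L^2\to L^{p^*}$ is a harmless elaboration of the paper's statement that the extension agrees with the original $L^2$-valued average.
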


We now outline the proof. The symmetry relation
$W_{1-\tau}(f,g)=\overline{W_\tau(g,f)}$ reduces matters to the average
over $(0,\frac{1}{2})$. The key ingredient is a weighted even-moment
estimate: for $d\geqslant3$ and every even integer $q\geqslant4$, with
$\alpha=d(\frac{1}{2}-\frac{1}{q})$, we establish, for
$f,g\in L^{2,q}(\R^d)$,
\begin{equation*}
	\left\|\int_0^{\frac{1}{2}}v(\tau)W_\tau(f,g)\md{\tau}\right\|_q
	\leqslant C_{d,q}M\|f\|_{2,q}\|g\|_{2,q},
	\qquad\text{if } |v(\tau)|\leqslant M\tau^{\alpha-1}.
\end{equation*}
Here $v$ may be complex-valued, and the average is understood by
continuous extension as in Theorem \ref{thm:weighted}. An application of Minkowski's inequality using \eqref{eq:lieb-tau}
and the weight bound would again produce a logarithmically divergent
upper bound. To overcome this obstruction, we expand a regularized
even moment of the entire average and apply the
Brascamp-Lieb-Luttinger inequality \cite{BLL74}. For bounded,
compactly supported functions, this reduces the estimate to symmetric
decreasing rearrangements, which we majorize by finite positive sums
of Gaussians. The coefficients of each majorant are controlled in
$\ell^q$ by the corresponding $L^{2,q}$ quasi-norm. A Gaussian bound and dyadic summation yield the weighted estimate.

To obtain the endpoint estimate for all $d\geqslant3$, we embed the
average over $(0,\frac{1}{2})$ in an analytic family.
A Fourier-multiplier bound gives the estimate
$L^2\times L^2\to L^2$ on the left boundary, while the weighted
eighth-moment estimate gives
$L^{2,8}\times L^{2,8}\to L^8$ on the right boundary.
Applying the bilinear version of Stein's complex interpolation
theorem \cite{Ste56,SW71,GM14} then yields the endpoint Lorentz
estimate $L^{2,p^*}\times L^{2,p^*}\to L^{p^*}$.
The sharpness of the Lorentz indices follows by testing on
truncated radial power functions; see Proposition~\ref{prop:lorentz-sharpness}.

The paper is organized as follows.
In Section \ref{sec:preparations}, we collect the preliminary results
and prove the sharpness of the Lorentz indices.
We prove the weighted even-moment estimate in
Section \ref{sec:even-moment} and complete the proofs of
Theorem \ref{thm:main} and Corollary~\ref{cor:bj-operator}
in Section~\ref{sec:proof}.

\noindent\textbf{Notation.}
We write $\|h\|_{L^p(\R^d)}=\|h\|_p$ when the underlying space is
clear. For $0<p<\infty$, $0<q\leqslant\infty$, we denote the Lorentz quasi-norm of $h$ by $\|h\|_{p,q}$. We denote the volume of the unit ball in $\R^d$ by $\omega_{d}$. The constants $c_d, C_d, C_{d,q}$ depend only on the indicated
parameters and may vary from line to line. 

For a measurable function $h$ with $|\{|h|>t\}|<\infty$
for all $t>0$, let $h^*$ denote the symmetric decreasing
rearrangement of $|h|$. It is nonnegative, radial and
nonincreasing, and satisfies
\begin{equation*}
	|\{h^*>t\}|=|\{|h|>t\}|,\qquad t>0.
\end{equation*}
In particular, $\|h^*\|_p=\|h\|_p$ for $1\leqslant p<\infty$. See \cite[Chapter 3]{LL01} for more details.

\bigskip
\begingroup
\emergencystretch=2em
\noindent\textbf{Acknowledgements.}\quad
The author acknowledges partial support from the China Scholarship Council
and Ludwig-Maximilians-Universit\"at M\"unchen. The author is grateful to his
advisor, Professor Rupert Frank, for suggesting this problem and for
valuable discussions.

\medskip
\noindent\textbf{Declaration of AI Use.}\quad
At an early stage of this work, the author established the bounds
$W_{BJ}:L^2(\R^d)\times L^2(\R^d)\to L^{p^*,s}(\R^{2d})$
for every $s>p^*$ by complex interpolation, without using AI tools.
In subsequent discussions with ChatGPT, the author observed that
expanding the even-order moments in the right-boundary estimate
allows an application of the Brascamp-Lieb-Luttinger inequality.
AI tools were also used extensively to polish the exposition, correct
grammatical errors, and improve the wording, readability, and formatting
of the manuscript. All mathematical arguments and proofs in the final manuscript were checked and written by the author.

\par\endgroup

\section{Preparations}\label{sec:preparations}

\subsection{Fourier identities and standard inequalities}

We use the following convention for the Fourier transform:
$$
\widehat {h}(\xi)=\int_{\R^d}e^{-2\pi i x\cdot\xi}h(x)\md{x}.
$$
In particular, for Gaussians we have 
\begin{equation}\label{eq:gaussian-fourier}
 \int_{\R^d}e^{-\pi t|y|^2}e^{-2\pi i y\cdot\xi}\md{y}
 =t^{-\frac{d}{2}}e^{-\pi\frac{|\xi|^2}{t}},\qquad t>0.
\end{equation}
See for example \cite[Example 2.2.9 and Proposition 2.2.11(8)]{Gra14}.

Let $F\in L^2(\R^{2d})$. The symplectic Fourier transform $F\mapsto \Fs F$ given by
\begin{equation*}
 \Fs F(y,\eta):=\iint_{\R^{2d}}
 e^{-2\pi i(\xi\cdot y-x\cdot\eta)}F(x,\xi)\md{x}\md{\xi}
 =\widehat F(-\eta,y)
\end{equation*}
is a unitary operator on $L^2(\R^{2d})$. 

For $f,g\in L^2(\R^d)$, set $A(f,g)=\Fs W_{1/2}(f,g)$. A direct computation shows
\begin{equation}\label{eq:phase}
	\Fs W_\tau(f,g)(y,\eta)
	=e^{2\pi i(\tau-\frac{1}{2})y\cdot\eta}A(f,g)(y,\eta),
\end{equation}
where the identity holds in $L^2(\R^{2d})$. 

The following facts are standard and we give a proof for completeness.

\begin{lemma}\label{lem:moyal}
Let $f,g\in L^2(\R^d)$. Then we have
\begin{enumerate}
	\item  The symmetry relation (see \cite[Proposition 1.3.23(i)]{CR20}):
	\begin{equation}\label{eq:reflection}
		W_{1-\tau}(f,g)=\overline{W_\tau(g,f)}.
	\end{equation}
	\item Moyal's identity (see \cite[Corollary 1.3.28]{CR20}):
	\begin{equation}\label{eq:moyal}
		\|W_\tau(f,g)\|_2=\|A(f,g)\|_2=\|f\|_2\|g\|_2.
	\end{equation}
	\item  For any $v\in L^1(0,1)$, the map
	$$
	(f,g)\in L^2(\R^d)\times L^2(\R^d)\mapsto\int_0^1v(\tau)W_\tau(f,g)\md{\tau}
	$$
	is well defined and satisfies
	$$
	\left\|\int_0^1 v(\tau)W_{\tau}(f,g)\md{\tau}\right\|_{2}\leqslant \| v\|_1 \|f\|_2\|g\|_2.
	$$	
\end{enumerate}
\end{lemma}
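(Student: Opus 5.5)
The plan is to prove all three items first for $f,g\in\mcS(\R^d)$ by direct computation, and then pass to $L^2$ by density, using Moyal's identity to control the extension.

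\emph{Symmetry (1).} I would start from \eqref{eq:wtau} for Schwartz functions. Conjugating $W_\tau(g,f)$ gives
\[
\overline{W_\tau(g,f)}(x,\xi)=\int e^{2\pi i\xi\cdot y}\,\overline{g(x+\tau y)}\,f(x-(1-\tau)y)\,\md{y}.
\]
Substituting $y\mapsto -y$ turns this into $\int e^{-2\pi i\xi\cdot y} f(x+(1-\tau)y)\overline{g(x-\tau y)}\md{y}$, which is exactly $W_{1-\tau}(f,g)$. For general $L^2$ functions the identity then follows by density once we know that $(f,g)\mapsto W_\tau(f,g)$ is continuous into $L^2$, and this continuity is a consequence of item (2).

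\emph{Moyal (2).} For Schwartz $f,g$ and $0<\tau<1$, I would make the linear change of variables $u=x+\tau y$, $w=x-(1-\tau)y$. Its Jacobian determinant equals $1$ in absolute value, since $u-w=y$ and $x=(1-\tau)u+\tau w$. With this substitution, $W_\tau(f,g)(x,\xi)$ becomes the partial Fourier transform in $y$ of $F(x,y)=f(x+\tau y)\overline{g(x-(1-\tau)y)}$. Plancherel in $y$ followed by the unimodular change of variables gives
\[
\|W_\tau(f,g)\|_2^2=\iint |f(u)|^2|g(w)|^2\,\md u\,\md w .
\]
The cases $\tau\in\{0,1\}$ work the same way, because the map $(x,y)\mapsto(x+\tau y,\,x-(1-\tau)y)$ is still invertible with unit determinant. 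The identity for $A(f,g)$ follows because $\Fs$ is unitary. For $\tau\in(0,1)$ one could alternatively use \eqref{eq:phase}: the phase factor has modulus one, so $\|W_\tau\|_2=\|A\|_2=\|W_{1/2}\|_2$. By sesquilinearity and this isometry, $W_\tau$ extends uniquely from $\mcS\times\mcS$ to a bounded sesquilinear map $L^2\times L^2\to L^2(\R^{2d})$ that preserves the identity.

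\emph{Weighted average (3).} Working on the Fourier side via \eqref{eq:phase}, the map $\tau\mapsto \Fs W_\tau(f,g)=e^{2\pi i(\tau-\frac12)y\cdot\eta}A(f,g)$ is continuous from $[0,1]$ into $L^2(\R^{2d})$ by dominated convergence, since $|A|^2$ is an integrable majorant. It follows that $\tau\mapsto W_\tau(f,g)$ is continuous into $L^2$, hence strongly measurable and bounded by $\|f\|_2\|g\|_2$. The Bochner integral $\int_0^1 v(\tau)W_\tau(f,g)\md\tau$ is therefore well defined for $v\in L^1(0,1)$. Minkowski's inequality for Bochner integrals combined with \eqref{eq:moyal} yields the bound $\|v\|_1\|f\|_2\|g\|_2$. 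Equivalently, on the Fourier side the integral equals $m_v(y\cdot\eta)A(f,g)$ with $|m_v|\leqslant\|v\|_1$.

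\emph{Main obstacle.} The only delicate point is justifying that the $L^2$ extension of $W_\tau$ agrees with \eqref{eq:phase} and that $\tau\mapsto W_\tau$ is measurable. Defining $W_\tau:=\Fs^{-1}\bigl(e^{2\pi i(\tau-\frac12)y\cdot\eta}A\bigr)$ on $L^2$ and checking agreement on $\mcS$ settles both issues at once.
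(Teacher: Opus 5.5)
Your proposal is correct and follows essentially the same route as the paper: the substitution $y\mapsto -y$ for the symmetry relation, Plancherel in $y$ followed by the unimodular change of variables $u=x+\tau y$, $w=x-(1-\tau)y$ for Moyal's identity, and, for item (3), continuity of $\tau\mapsto W_\tau(f,g)$ via \eqref{eq:phase} and dominated convergence, then Minkowski's inequality. The only difference is that you first work on $\mcS(\R^d)$ and extend to $L^2$ by density using the isometry, which makes explicit how $W_\tau$ is defined on $L^2$ (notably at $\tau\in\{0,1\}$, where the defining integral need not converge absolutely); the paper instead performs the same computations directly for $L^2$ functions.
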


\begin{proof}
	(1) By definition, we have
 \begin{align*}
 	W_{1-\tau}(f,g)&=\int_{\R^d}e^{-2\pi i \xi\cdot y}f(x+(1-\tau)y)\overline{g(x-\tau y)}\md y\\
 	&=\overline{\int_{\R^d} e^{-2\pi i\xi \cdot y} g(x+\tau y)\overline{f(x-(1-\tau)y)}\md y}\\
 	&=\overline{W_\tau(g,f)}.
 \end{align*}

(2)	 The first equality in \eqref{eq:moyal} is due to \eqref{eq:phase} and Plancherel's theorem. By Plancherel, we also have
\begin{align*}
	\|W_\tau(f,g)\|_2^2&=\|(f(x+\tau \cdot)\overline{g(x-(1-\tau)\cdot)})^\wedge(\xi)\|_2^2\\
	&=\int_{\R^{2d}} |f(x+\tau y)|^2|g(x-(1-\tau)y)|^2\md x\md y.
\end{align*}
Using a change of variables $u=x+\tau y$ and $v=x-(1-\tau)y$, we obtain
$$
\|W_\tau(f,g)\|_2^2=\int_{\R^{2d}} |f(u)|^2|g(v)|^2\md u\md v=\|f\|_2^2\|g\|_2^2.
$$

(3) Let $\tau_n, \tau\in [0,1]$ and $\tau_n\to\tau$ as $n\to \infty$. Then for every $(y,\eta)\in \R^{2d}$,
$$
\left| e^{2\pi i (\tau_n-\frac{1}{2})y\cdot \eta}-e^{2\pi i(\tau-\frac{1}{2})y\cdot \eta}\right|\to 0,\qquad \text{as }n\to \infty.
$$
By \eqref{eq:phase}, one has
$$
\| W_\tau(f,g)-W_{\tau_n}(f,g)\|_2^2=\int_{\R^{2d}} \left| e^{2\pi i (\tau_n-\frac{1}{2})y\cdot \eta}-e^{2\pi i(\tau-\frac{1}{2})y\cdot \eta}\right|^2 |A(f,g)(y,\eta)|^2\md y\md\eta.
$$
Since $\|A(f,g)\|_2=\|f\|_2\|g\|_2$, the dominated convergence theorem gives
$$
\lim_{n\to\infty}\| W_\tau(f,g)-W_{\tau_n}(f,g)\|_2^2=0,
$$
which implies $\tau\mapsto W_\tau(f,g)$ is continuous. Therefore the map is strongly measurable and well-defined. For every $v\in L^1(0,1)$, Minkowski's inequality and Moyal's identity \eqref{eq:moyal} give
\begin{equation*}
 \left\|\int_0^1 v(\tau)W_{\tau}(f,g)\md{\tau}\right\|_{2}\leqslant\int_0^1\|v(\tau)W_\tau(f,g)\|_2\md{\tau}
 =\|v\|_1\|f\|_2\|g\|_2<\infty.
\end{equation*}
\end{proof}

We shall use the following Brascamp-Lieb-Luttinger inequality later. See \cite[Theorem 3.8]{LL01} and the original paper \cite{BLL74}.
\begin{lemma}[Brascamp-Lieb-Luttinger]
\label{lem:bll}
Let $f_1, f_2,\dots, f_m$ be nonnegative functions on $\R^d$, whose positive superlevel sets have finite measure. Let $k\leqslant m$ and let $B=\{b_{ij}\}$ be a $k\times m$ matrix with $(1\leqslant i\leqslant k, 1\leqslant j\leqslant m)$. Define
$$
I(f_1,\dots,f_m):=\int_{\R^d}\cdots\int_{\R^d} \prod_{j=1}^m f_j\left(\sum_{i=1}^k b_{ij}x_i\right)\md x_1\cdots \md x_k.
$$
Then $I(f_1,\dots, f_m)\leqslant I(f_1^*,\dots, f_m^*)$.
\end{lemma}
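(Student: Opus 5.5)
The plan is the classical Brascamp–Lieb–Luttinger scheme: reduce to one dimension by Steiner symmetrization, then to indicator functions of finite unions of intervals, and finally to a monotonicity statement for translated polytopes.

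\emph{Reductions.} By monotone convergence and the layer-cake representation $f_j=\int_0^\infty \mathbf 1_{\{f_j>t\}}\md t$, multilinearity of $I$ reduces the claim to $f_j=\mathbf 1_{E_j}$ with $|E_j|<\infty$. Indeed, $(\mathbf 1_{E})^*=\mathbf 1_{E^*}$ and $f^*=\int_0^\infty \mathbf 1_{\{f>t\}^*}\md t$. We may also assume that the rows of $B$ span $\R^k$; otherwise $I$ is $0$ or $\infty$, and it is easy to check that the inequality persists in that case. Next, since the $b_{ij}$ are scalars acting on vectors $x_i\in\R^d$, fix a unit vector $e$ and write $x_i=s_ie+x_i'$ with $x_i'\perp e$. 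Then $\sum_i b_{ij}x_i$ splits into the component $\sum_i b_{ij}s_i$ along $e$ and $\sum_i b_{ij}x_i'$ orthogonal to it. Hence, by Fubini, $I$ is an integral over $(x_i')$ of one-dimensional functionals of the same form, applied to the slices of $E_j$ along lines parallel to $e$.

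Once the one-dimensional case is known, $I$ does not decrease under Steiner symmetrization of all the $E_j$ in the direction $e$. A standard fact states that, for a suitable sequence of directions, iterated Steiner symmetrizations converge to $E_j^*$ in measure (symmetric difference). Since $I$ is continuous under $L^1$ convergence of indicator functions when the rows of $B$ span, this gives $I(\mathbf 1_{E_1},\dots)\leqslant I(\mathbf 1_{E_1^*},\dots)$. We need only $I\leqslant$ the limit, so lower semicontinuity from Fatou together with an approximation from inside would also suffice.

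\emph{One-dimensional case.} By regularity of Lebesgue measure and continuity again, assume each $E_j\subset\R$ is a finite disjoint union of intervals $\bigcup_l [c_{jl}-a_{jl},c_{jl}+a_{jl}]$. Expanding, $I$ becomes a finite sum of the volumes
\[
V(c)=\Bigl|\Bigl\{s\in\R^k:\ \bigl|\textstyle\sum_i b_{ij}s_i-c_{j l_j}\bigr|\leqslant a_{jl_j}\ \forall j\Bigr\}\Bigr|.
\]
Now move all centers simultaneously by $c_{jl}(t)=t\,c_{jl}$ for $t$ decreasing from $1$. For each term, $t\mapsto V(tc)$ is the volume of a slice of a convex set in $\R^k\times\R$, so by Brunn–Minkowski $V(tc)^{1/k}$ is concave on $[-1,1]$. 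Moreover $V(-tc)=V(tc)$ via $s\mapsto -s$. An even concave function is nonincreasing in $|t|$, so each term, and hence $I$, is nondecreasing as $t\downarrow0$.

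This continues until the first time two intervals of the same $E_j$ touch. At that time we merge them into a single interval, which leaves $\mathbf 1_{E_j}$ unchanged a.e., and restart the process. After finitely many merges each $E_j$ is a single interval; the final run of the flow moves its center to $0$, yielding $E_j^*$.

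\emph{Main obstacle.} The delicate step is this one-dimensional flow: checking that the total measure $|E_j|$ is preserved, that merging does not break monotonicity, and that the process terminates in finitely many steps at $E_j^*$. The Steiner-convergence step is classical, and I would cite it from \cite[Chapter 3]{LL01}.
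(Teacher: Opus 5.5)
The paper does not prove this lemma; it quotes it from \cite[Theorem 3.8]{LL01} and \cite{BLL74}. Your outline is exactly the classical Brascamp--Lieb--Luttinger proof behind that citation: layer-cake reduction to indicators, Steiner symmetrization to pass to $d=1$, and sliding the interval centres $c\mapsto tc$ with merging, driven by the Brunn--Minkowski concavity and evenness of the polytope volumes. Its main line is sound, but three small points need fixing.

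First, the nondegeneracy condition is $\operatorname{rank}B=k$: the columns of $B$, not its rows, span $\R^k$.

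Second, your Fatou aside points the wrong way. Fatou gives lower semicontinuity, but along the Steiner iterates $E^{(n)}$ you need $\limsup_n I(E^{(n)})\leqslant I(E^*)$. Your $L^1$-continuity argument is what actually supplies this.

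Third, a zero column of $B$ must be excluded. It produces the factor $f_j(0)$, which is not $L^1$-continuous, and for which the inequality genuinely fails: take $f_j=\mathbf 1_{\{0\}}$, so that $f_j^*=0$. This case never occurs in the paper's application, where every column is nonzero.
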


We shall use the following bilinear version of Stein's complex
interpolation theorem \cite{Ste56}, which follows from the result
of Grafakos and Masty{\l}o \cite[Theorem 1.1]{GM14}.

\begin{lemma}\label{lem:stein}
	Let $2<q<\infty$. For $0\leqslant\mathrm{Re}z\leqslant1$, let
	$\mathcal T_z$ be a sesquilinear family on
	$\mcS(\R^d)\times\mcS(\R^d)$ with values in $L^2(\R^{2d})$.
	Suppose that, for every fixed $f,g\in\mcS(\R^d)$, the map $z\mapsto\mathcal T_z(f,g)$ is
	$L^2(\R^{2d})$-valued, strongly holomorphic in the open strip $\{z\in \C: 0<\mathrm{Re}z<1\}$, and continuous on its closure. Assume that, for some $N\geqslant0$,
	\begin{equation}\label{eq:lorentz-interpolation-growth}
		\|\mathcal T_{\sigma+it}(f,g)\|_2
		\leqslant C_{f,g}(1+|t|)^N,
		\qquad 0\leqslant\sigma\leqslant1,\quad t\in\R.
	\end{equation}
	Suppose that there exist $A_0,A_1>0$ and $N_0,N_1\geqslant0$ such
	that, for every $f,g\in\mcS(\R^d)$ and $t\in\R$,
	\begin{equation}\label{eq:lorentz-left}
		\|\mathcal T_{it}(f,g)\|_2
		\leqslant A_0(1+|t|)^{N_0}\|f\|_2\|g\|_2
	\end{equation}
	and
	\begin{equation}\label{eq:lorentz-right}
		\|\mathcal T_{1+it}(f,g)\|_q
		\leqslant A_1(1+|t|)^{N_1}\|f\|_{2,q}\|g\|_{2,q}.
	\end{equation}
	If $0<\theta<1$ and
	\begin{equation*}
		\frac{1}{p}=\frac{1-\theta}{2}+\frac{\theta}{q},
	\end{equation*}
	then
	\begin{equation}\label{eq:lorentz-interpolation-conclusion}
		\|\mathcal T_\theta(f,g)\|_p
		\leqslant C_{q,\theta,N_0,N_1}A_0^{1-\theta}A_1^\theta
		\|f\|_{2,p}\|g\|_{2,p}
	\end{equation}
	for every $f,g\in\mcS(\R^d)$. Consequently, $\mathcal T_\theta$
	has a unique continuous sesquilinear extension
	\begin{equation*}
		\mathcal T_\theta:
		L^{2,p}(\R^d)\times L^{2,p}(\R^d)\longrightarrow L^p(\R^{2d}),
	\end{equation*}
	satisfying \eqref{eq:lorentz-interpolation-conclusion}.
\end{lemma}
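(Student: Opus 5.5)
The plan is to deduce the lemma from the multilinear complex interpolation theorem of Grafakos and Masty{\l}o \cite[Theorem 1.1]{GM14}. That theorem is formulated for analytic families of multilinear operators between complex interpolation couples of Banach lattices, under admissible growth. Two preliminary reductions are needed. One removes the polynomial growth in $t$ on the boundary lines. The other identifies the relevant complex interpolation spaces.

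\textbf{Step 1: normalizing the growth.} For fixed $f,g\in\mcS(\R^d)$, set
\begin{equation*}
\widetilde{\mathcal T}_z(f,g)=e^{(z-\theta)^2}\mathcal T_z(f,g).
\end{equation*}
This family is still sesquilinear, strongly holomorphic in the open strip and continuous on its closure, and $\widetilde{\mathcal T}_\theta=\mathcal T_\theta$. On the line $\mathrm{Re}z=\sigma$ we have $|e^{(z-\theta)^2}|=e^{(\sigma-\theta)^2-t^2}$. Hence \eqref{eq:lorentz-interpolation-growth} gives $\|\widetilde{\mathcal T}_{\sigma+it}(f,g)\|_2\leqslant C_{f,g}'e^{-t^2/2}$, which is far inside the admissible growth class required in \cite{GM14}. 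On the boundary, \eqref{eq:lorentz-left} and \eqref{eq:lorentz-right} become uniform bounds
\begin{equation*}
\|\widetilde{\mathcal T}_{it}(f,g)\|_2\leqslant C A_0\|f\|_2\|g\|_2,
\qquad
\|\widetilde{\mathcal T}_{1+it}(f,g)\|_q\leqslant C A_1\|f\|_{2,q}\|g\|_{2,q},
\end{equation*}
with $C=\sup_t e^{\max(\theta^2,(1-\theta)^2)-t^2}(1+|t|)^{\max(N_0,N_1)}$, which depends only on $\theta,N_0,N_1$. Sesquilinearity is handled by applying the bilinear theorem to $(f,\bar g)\mapsto\widetilde{\mathcal T}_z(f,g)$, or equivalently by noting that conjugation preserves all the norms involved.

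\textbf{Step 2: the interpolation spaces.} On the target side, $[L^2(\R^{2d}),L^q(\R^{2d})]_\theta=L^p(\R^{2d})$ with equal norms; this is the Riesz--Thorin/Calder\'on identification. On the source side, $L^{2,q}(\R^d)$ with $q>2$ carries an equivalent Banach lattice norm with the Fatou property, for instance $\|h^{**}t^{1/2-1/q}\|_{L^q(dt)}$. Calder\'on's theorem therefore gives $[L^2,L^{2,q}]_\theta=(L^2)^{1-\theta}(L^{2,q})^\theta$, and I claim this Calder\'on product equals $L^{2,p}$ with equivalent norms. The inclusion into $L^{2,p}$ follows from H\"older's inequality for rearrangements: $(h_0^{1-\theta}h_1^\theta)^*(s)\leqslant h_0^*(s/2)^{1-\theta}h_1^*(s/2)^\theta$, followed by H\"older in $L^r(ds/s)$ with $\frac1p=\frac{1-\theta}{2}+\frac\theta q$. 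For the reverse inclusion, given $h\in L^{2,p}$ I factor $|h|=|h_0|^{1-\theta}|h_1|^\theta$. Both factors are functions of $|h|$, so they are equimeasurable with functions of $h^*$, and I choose them through the weight $\phi(s)=s^{1/2}h^*(s)$: take $h_0^*\approx h^*\phi^{a}$ and $h_1^*\approx h^*\phi^{b}$ with $(1-\theta)a+\theta b=0$, tuned so that $\|h_0\|_2^2=\int\phi^{2+2a}\,ds/s$ and $\|h_1\|_{2,q}^q=\int\phi^{q+qb}\,ds/s$ both equal $\|h\|_{2,p}^p$. This requires $2+2a=p=q+qb$, which is consistent with the relation between $p$, $q$ and $\theta$. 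These are the spaces to which \cite{GM14} will be applied: $X_0=L^2$ and $X_1=L^{2,q}$ in each variable, and $Y_0=L^2$, $Y_1=L^q$.

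\textbf{Step 3: applying the theorem and extending.} With Steps 1 and 2 in place, \cite[Theorem 1.1]{GM14} yields $\|\mathcal T_\theta(f,g)\|_p\leqslant C\,A_0^{1-\theta}A_1^\theta\|f\|_{[L^2,L^{2,q}]_\theta}\|g\|_{[L^2,L^{2,q}]_\theta}$, and Step 2 converts this into \eqref{eq:lorentz-interpolation-conclusion}. One technical point is that \cite{GM14} proves the estimate on a dense class, such as simple functions or the intersection $X_0\cap X_1$, whereas our family is only defined on $\mcS$. I would handle this by running the three-lines argument directly on $\mcS(\R^d)$, which is dense in $L^2\cap L^{2,q}$ and in $L^{2,p}$ since $p<\infty$. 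The extremal functions in the definition of the complex interpolation norm are then approximated by $\mcS$-valued analytic functions of the form $\sum_j e^{\delta z^2}\lambda_j(z)\varphi_j$. Uniqueness and existence of the continuous sesquilinear extension to $L^{2,p}\times L^{2,p}$ follow from density together with the bound.

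I expect Step 2 to be the main obstacle: identifying $[L^2,L^{2,q}]_\theta$ with $L^{2,p}$ when the first Lorentz indices coincide. The standard results for $p_0\neq p_1$ do not apply directly, so the factorization must be done by hand and compatibly with the Banach renorming. The density and approximation issue in Step 3 is secondary and routine.
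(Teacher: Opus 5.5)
Your proposal is correct and takes essentially the paper's route: both deduce the lemma from \cite[Theorem 1.1]{GM14}, the paper applying it to the trilinear form $\Lambda_z(f,g,h)=\int_{\R^{2d}}\mathcal T_z(f,\overline g)(X)h(X)\,\md{X}$ with target $\C$ and dense subspaces $\mcS$ (recovering $L^p$ by duality), so no hand-run density argument is needed there, while your Step 2 makes explicit the identification $[L^2,L^{2,q}]_\theta=L^{2,p}$ that the paper uses tacitly. If you nonetheless run Step 3 by hand, the approximating $\mcS$-valued analytic functions must satisfy $F(\theta)=f$ exactly, or differ from $f$ by a Schwartz function that is small in $L^2$ rather than merely in $L^{2,p}$, because nothing a priori controls $\mathcal T_\theta$ by $L^{2,p}$ norms of its arguments.
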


\begin{proof}
	For $f,g\in\mcS(\R^d)$ and $h\in\mcS(\R^{2d})$, define the trilinear family
	\begin{equation*}
		\Lambda_z(f,g,h)
		:=\int_{\R^{2d}}\mathcal T_z(f,\overline g)(X)h(X)\,\md{X}.
	\end{equation*}
	For fixed $f,g,h$ as above, pairing the $L^2$-valued map
	$\mathcal T_z(f,\overline g)$ with $h$ shows that
	$z\mapsto\Lambda_z(f,g,h)$ is holomorphic in the open strip and
	continuous on its closure. In particular, its boundary values are
	measurable. By \eqref{eq:lorentz-interpolation-growth} and the
	Cauchy--Schwarz inequality,
	\begin{equation*}
		|\Lambda_{\sigma+it}(f,g,h)|
		\leqslant C_{f,\overline g}\|h\|_2(1+|t|)^N,
		\qquad 0\leqslant\sigma\leqslant1,
	\end{equation*}
	which gives the required admissible growth. Let $q'=\frac{q}{q-1}$. The boundary estimates \eqref{eq:lorentz-left} and
	\eqref{eq:lorentz-right}, together with H\"older's inequality, give
     \begin{align*}
     		|\Lambda_{it}(f,g,h)|
     	&\leqslant A_0(1+|t|)^{N_0}
     	\|f\|_2\|g\|_2\|h\|_2,\\
     	|\Lambda_{1+it}(f,g,h)|
     	&\leqslant A_1(1+|t|)^{N_1}
     	\|f\|_{2,q}\|g\|_{2,q}\|h\|_{q'}.
     \end{align*}
     
	Consider the couples
	\begin{align*}
		X_{01}=X_{02}&=L^2(\R^d),
		&X_{11}=X_{12}&=L^{2,q}(\R^d),\\
		X_{03}&=L^2(\R^{2d}),
		&X_{13}&=L^{q'}(\R^{2d}),
	\end{align*}
	and the dense subspaces $\mathcal X_1=\mathcal X_2=\mcS(\R^d)$ and $\mathcal X_3=\mcS(\R^{2d})$.	For the target spaces, take $Y_0=Y_1=\C$ on the one-point measure
	space with mass one. 
	
	Let $p'=\frac{p}{p-1}$. Applying \cite[Theorem 1.1]{GM14} to this family at $z=\theta$, we obtain
			\begin{equation*}
			|\Lambda_\theta(f,g,h)|
			\leqslant C_{q,\theta,N_0,N_1}A_0^{1-\theta}A_1^\theta
			\|f\|_{2,p}\|g\|_{2,p}\|h\|_{p'}.
		\end{equation*}
		For fixed $f,g\in\mcS(\R^d)$, this shows that
		$h\mapsto\Lambda_\theta(f,g,h)$ extends to a bounded linear
		functional on $L^{p'}(\R^{2d})$. By duality, this functional
		is represented by a function $F_{f,g}\in L^p(\R^{2d})$ satisfying
		\begin{equation*}
			\|F_{f,g}\|_p
			\leqslant C_{q,\theta,N_0,N_1}A_0^{1-\theta}A_1^\theta
			\|f\|_{2,p}\|g\|_{2,p}.
		\end{equation*}
		By the definition of $\Lambda_\theta$, $F_{f,g}$ and
		$\mathcal T_\theta(f,\overline g)\in L^2(\R^{2d})$ define
		the same distribution and hence agree almost everywhere.
		Replacing $g$ by $\overline g$ gives
		\eqref{eq:lorentz-interpolation-conclusion}. Since $\mcS(\R^d)$ is dense in $L^{2,p}(\R^d)$, \eqref{eq:lorentz-interpolation-conclusion} and sesquilinearity give a continuous sesquilinear extension satisfying the same estimate.
		The uniqueness also follows from density.
\end{proof}

\subsection{Gaussian estimates and dyadic summation}

We first estimate the cross-$\tau$-Wigner distributions of the Gaussian functions defined below. See \cite[Lemma 1.3.34]{CR20} for a related computation for the cross-Wigner distribution
($\tau=\frac{1}{2}$).

For $r>0$, define
\begin{equation}\label{eq:gaussian}
	\phi_r(x):=r^{-\frac{d}{2}}e^{-\pi\frac{|x|^2}{r^2}}.
\end{equation}
\begin{lemma}\label{lem:gaussian-pair}
	Let $d\geqslant1$ and $0<\alpha<d$, and let
	$$
	P_r(x):=r^{\alpha-\frac{d}{2}}\left(1+\frac{|x|^2}{r^2}\right)^{-\frac{d-\alpha}{4}}.
	$$
	There is $C_{d,\alpha}>0$ such that, for every $r,s>0$ and $(x,\xi)\in\R^{2d}$,
	\begin{equation*}
		\int_0^{\frac{1}{2}}\tau^{\alpha-1}
		|W_\tau(\phi_r,\phi_s)(x,\xi)|\md{\tau}
		\leqslant C_{d,\alpha}P_r(x)P_{s^{-1}}(\xi).
	\end{equation*}
\end{lemma}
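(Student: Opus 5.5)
The plan is to compute $W_\tau(\phi_r,\phi_s)$ in closed form, which is possible because the integrand in \eqref{eq:wtau} is a Gaussian in $y$. Then reduce the $\tau$-integral by scaling to a single one-dimensional integral in a dimensionless parameter, and bound that integral.

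\emph{Step 1: explicit formula.} Since $\phi_r,\phi_s$ are real, $W_\tau(\phi_r,\phi_s)(x,\xi)$ equals $(rs)^{-d/2}$ times the Fourier transform at $\xi$ of
$$y\mapsto \exp\!\Big(-\pi\Big[\tfrac{|x+\tau y|^2}{r^2}+\tfrac{|x-(1-\tau)y|^2}{s^2}\Big]\Big).$$
Write $a=\frac{\tau^2}{r^2}+\frac{(1-\tau)^2}{s^2}$ and $b=\frac{\tau}{r^2}-\frac{1-\tau}{s^2}$. Completing the square in $y$ and using \eqref{eq:gaussian-fourier} (the linear term only produces a unimodular phase), we get
$$|W_\tau(\phi_r,\phi_s)(x,\xi)|=(rs)^{-d/2}a^{-d/2}\exp\!\Big(-\pi|x|^2\big(\tfrac1{r^2}+\tfrac1{s^2}-\tfrac{b^2}{a}\big)\Big)e^{-\pi|\xi|^2/a}.$$
A direct expansion gives $(\frac1{r^2}+\frac1{s^2})a-b^2=(\tau+(1-\tau))^2/(r^2s^2)=1/(r^2s^2)$. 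So, with $D_\tau:=r^2s^2a=\tau^2s^2+(1-\tau)^2r^2$,
$$|W_\tau(\phi_r,\phi_s)(x,\xi)|=\Big(\tfrac{rs}{D_\tau}\Big)^{d/2}\exp\!\Big(-\pi\tfrac{|x|^2+r^2s^2|\xi|^2}{D_\tau}\Big).$$

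\emph{Step 2: scaling.} For $0<\tau\le\frac12$ we have $1-\tau\in[\frac12,1]$. Hence $D_\tau\asymp r^2(1+\lambda^2)$ with $\lambda:=\tau s/r$, up to absolute factors (harmless inside the exponential after adjusting the constant $\pi$ to some $c>0$). Set $u=|x|/r$ and $w=s|\xi|$. Substituting $\tau=\lambda r/s$ gives $\tau^{\alpha-1}\md\tau=(r/s)^{\alpha}\lambda^{\alpha-1}\md\lambda$. Extending the $\lambda$-range to $(0,\infty)$, the left side is bounded by
$$C\Big(\tfrac sr\Big)^{\frac d2-\alpha}\int_0^\infty\lambda^{\alpha-1}(1+\lambda^2)^{-d/2}e^{-c(u^2+w^2)/(1+\lambda^2)}\md\lambda .$$
The prefactor $(s/r)^{d/2-\alpha}=r^{\alpha-d/2}(s^{-1})^{\alpha-d/2}$ is exactly the power part of $P_r(x)P_{s^{-1}}(\xi)$. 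It therefore remains to show that the integral is $\lesssim(1+u^2)^{-\frac{d-\alpha}4}(1+w^2)^{-\frac{d-\alpha}4}$.

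\emph{Step 3: the one-dimensional integral (main point).} Put $R^2=u^2+w^2$. Since $(1+u^2)(1+w^2)\le(1+R^2)^2$, it suffices to bound the integral by $C(1+R^2)^{-\frac{d-\alpha}2}$. For $R\le1$, drop the exponential: the integral converges because $\alpha>0$ at $0$ and $\alpha<d$ at $\infty$. For $R\ge1$, split at $\lambda=1$.
\begin{itemize}
\item On $(0,1)$ the integrand is $\le\lambda^{\alpha-1}e^{-cR^2/2}$, which gives exponential decay in $R$.
\item On $(1,\infty)$ it is $\lesssim\lambda^{\alpha-1-d}e^{-cR^2/(2\lambda^2)}$. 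The substitution $\lambda=R\mu$ gives $R^{\alpha-d}\int_0^\infty\mu^{\alpha-1-d}e^{-c/(2\mu^2)}\md\mu=C_{d,\alpha}R^{\alpha-d}$, which is finite since $\alpha<d$ and the Gaussian factor kills the origin.
\end{itemize}
Combining these gives the claim with a constant depending only on $d,\alpha$. The only delicate points are the algebraic identity in Step 1 and keeping track of the uniform comparability $D_\tau\asymp r^2(1+\lambda^2)$ on $(0,\frac12]$. The latter is exactly why the integral is restricted to $(0,\frac12)$.
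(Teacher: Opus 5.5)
Your proof is correct and follows the paper's route: the same closed form with $D_\tau=r^2(1-\tau)^2+s^2\tau^2$, the same comparability $D_\tau\asymp r^2+s^2\tau^2$ on $(0,\frac12)$, the same scaling in $\tau$, and the same inequality $(1+u^2)(1+w^2)\leqslant(1+u^2+w^2)^2$ at the end. The only difference is in the one-dimensional integral: the paper first uses $e^{-\pi t}\leqslant C_d(1+t)^{-d/2}$ to bound $|W_\tau|$ by $C_d(rs)^{d/2}(D_\tau+|x|^2+r^2s^2|\xi|^2)^{-d/2}$, which makes the $\tau$-integral exactly homogeneous and avoids your case split at $R=1$ and $\lambda=1$.
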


\begin{proof}
	By \eqref{eq:wtau} and \eqref{eq:gaussian},
	\begin{equation*}
		W_\tau(\phi_r,\phi_s)(x,\xi)
		=(rs)^{-\frac{d}{2}}\int_{\R^d}e^{-2\pi i\xi\cdot y}
		\exp\left(-\pi\frac{|x+\tau y|^2}{r^2}
		-\pi\frac{|x-(1-\tau)y|^2}{s^2}\right)\md{y}.
	\end{equation*}
	Set
	$$
	D=r^2(1-\tau)^2+s^2\tau^2,
	\qquad
	b=\frac{s^2\tau-r^2(1-\tau)}{D}.
	$$
	Since $r,s>0$, we have $D>0$. Expanding the two quadratic
	terms gives
	\begin{align*}
		\frac{|x+\tau y|^2}{r^2}+\frac{|x-(1-\tau)y|^2}{s^2}
		&=\left(\frac{1}{r^2}+\frac{1}{s^2}\right)|x|^2+2\left(
		\frac{\tau}{r^2}-\frac{1-\tau}{s^2}\right)x\cdot y
		\\
		&\qquad+\left(\frac{\tau^2}{r^2}+\frac{(1-\tau)^2}{s^2}\right)|y|^2\\
		&=\frac{r^2+s^2}{r^2s^2}|x|^2+\frac{2bD}{r^2s^2}x\cdot y+\frac{D}{r^2s^2}|y|^2\\
		&=\frac{D}{r^2s^2}|y+bx|^2+\left(\frac{r^2+s^2}{r^2s^2}-\frac{Db^2}{r^2s^2}\right)|x|^2.
	\end{align*}
	Since $D^2b^2=\bigl[s^2\tau-r^2(1-\tau)\bigr]^2$, a direct computation gives
	\begin{align*}
		\frac{r^2+s^2}{r^2s^2}-\frac{Db^2}{r^2s^2}=\frac{(r^2+s^2)D-D^2b^2}{Dr^2s^2}=\frac{(r^2+s^2)D-\bigl[s^2\tau-r^2(1-\tau)\bigr]^2}{Dr^2s^2}=\frac{1}{D}.
	\end{align*}
	Therefore we obtain
	$$
	W_\tau(\phi_r,\phi_s)(x,\xi)
	=(rs)^{-\frac{d}{2}}\int_{\R^d}e^{-2\pi i\xi\cdot y}
	\exp\left(-\pi \left[\frac{D}{r^2s^2}|y+bx|^2+\frac{|x|^2}{D}\right]\right)\md{y}.
	$$
	Substitute $u=y+bx$ and apply \eqref{eq:gaussian-fourier} with
	$t=\frac{D}{r^2s^2}$. We obtain
	\begin{equation*}
		W_\tau(\phi_r,\phi_s)(x,\xi)
		=(rs)^{\frac{d}{2}}D^{-\frac{d}{2}}
		\exp\left(-\pi\frac{|x|^2+r^2s^2|\xi|^2}{D}\right)
		e^{2\pi ibx\cdot\xi}.
	\end{equation*}
	Since $e^{-\pi u}\leqslant C_d(1+u)^{-\frac{d}{2}}$ for $u\geqslant0$,
	\begin{equation*}
		|W_\tau(\phi_r,\phi_s)(x,\xi)|
		\leqslant C_d(rs)^{\frac{d}{2}}
		\bigl(D+|x|^2+r^2s^2|\xi|^2\bigr)^{-\frac{d}{2}}.
	\end{equation*}
	For $0<\tau<\frac{1}{2}$, 
	$$
	D=r^2(1-\tau)^2+s^2\tau^2\geqslant \frac{r^2}{4}+s^2\tau^2.
	$$ 
    Set $\Lambda:=r^2+|x|^2+r^2s^2|\xi|^2$. Then $D+|x|^2+r^2s^2|\xi|^2>\frac{1}{4}(\Lambda +s^2\tau^2)$ and hence
	\begin{align*}
		\int_0^{\frac{1}{2}}\tau^{\alpha-1}|W_\tau(\phi_r,\phi_s)(x,\xi)|\md{\tau}&\leqslant C_d(rs)^{\frac{d}{2}}\int_0^\infty\tau^{\alpha-1}
		(\Lambda+s^2\tau^2)^{-\frac{d}{2}}\md{\tau}\\
		&=C_d(rs)^{\frac{d}{2}}s^{-\alpha}\Lambda^{\frac{\alpha-d}{2}}\times
		\int_0^\infty u^{\alpha-1}(1+u^2)^{-\frac{d}{2}}\md{u}\\
		&\leqslant C_{d,\alpha}(rs)^{\frac{d}{2}}s^{-\alpha}\Lambda^{\frac{\alpha-d}{2}}.
	\end{align*}
	Since $\Lambda=r^2+|x|^2+r^2s^2|\xi|^2=r^2(1+\frac{|x|^2}{r^2}+s^2|\xi|^2)$, 
	we obtain
	\begin{align*}
		(rs)^{\frac{d}{2}}s^{-\alpha}\Lambda^{\frac{\alpha-d}{2}}&=r^{\alpha-\frac{d}{2}}s^{\frac{d}{2}-\alpha}
		\left(1+\frac{|x|^2}{r^2}+s^2|\xi|^2\right)^{-\frac{d-\alpha}{2}}\\
		&\leqslant  r^{\alpha-\frac{d}{2}}s^{\frac{d}{2}-\alpha}
		(1+r^{-2}|x|^2)^{-\frac{d-\alpha}{4}}
		(1+s^2|\xi|^2)^{-\frac{d-\alpha}{4}}\\
		&=P_r(x)P_{s^{-1}}(\xi).
	\end{align*}
	Here we used that $(1+a)(1+b)\leqslant (1+a+b)^2$ for $a,b\geqslant0$.
\end{proof}

In order to use this estimate, we establish the following lemma, which states that for a nonnegative, bounded, compactly supported and symmetric decreasing function $h$, we can construct a majorant formed from a finite positive sum of Gaussians.

\begin{lemma}\label{lem:majorant}
	Let $d\geqslant1$ and $1\leqslant q<\infty$. Suppose that $h(x)=H(|x|)$ is
	nonnegative, bounded and compactly supported, and that $H$ is
	nonincreasing. Then there exists a finitely supported sequence of
	nonnegative numbers $(a_k)_{k\in\mathbb Z}$ such that 
\begin{equation*}
h\leqslant\sum_k a_k\phi_{2^k}\quad\text{almost everywhere},\qquad	 \|(a_k)\|_{\ell^q}\leqslant C_{d,q}\|h\|_{2,q},
\end{equation*}	
where  $\phi_r$ is given by \eqref{eq:gaussian} and $C_{d,q}$ depends only on $d$ and $q$.
\end{lemma}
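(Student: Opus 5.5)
We majorize $h$ layer by layer on dyadic shells, replacing each indicator of a ball by a multiple of a single Gaussian. Since $H$ is nonincreasing, for $2^{k-1}\leqslant|x|<2^{k}$ we have $h(x)\leqslant H(2^{k-1})$. For $|x|<2^k$ the Gaussian satisfies
\begin{equation*}
\phi_{2^k}(x)=2^{-\frac{kd}{2}}e^{-\pi|x|^2/4^k}\geqslant e^{-\pi}2^{-\frac{kd}{2}}.
\end{equation*}
We therefore set
\begin{equation*}
a_k:=e^{\pi}2^{\frac{kd}{2}}H(2^{k-1}),\qquad k\in\Z.
\end{equation*}
Then $a_k\phi_{2^k}\geqslant H(2^{k-1})\geqslant h$ on the shell $\{2^{k-1}\leqslant|x|<2^k\}$, and hence $h\leqslant\sum_k a_k\phi_{2^k}$ for all $x\neq0$, since the shells partition $\R^d\setminus\{0\}$ and all terms are nonnegative.

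This sequence is not yet finitely supported. For large $k$, compact support gives $H(2^{k-1})=0$, so $a_k=0$. For very negative $k$, however, $a_k\approx 2^{kd/2}\|h\|_\infty$ is positive for all $k$. To fix this, choose $k_0$ with $2^{k_0}$ below the support radius (or any fixed scale). Replace all the small shells by a single term: for $|x|<2^{k_0}$ we have $h\leqslant\|h\|_\infty=H(0^+)$, so the term $e^\pi 2^{k_0d/2}\|h\|_\infty\phi_{2^{k_0}}$ covers the whole ball $\{|x|<2^{k_0}\}$. We then keep $a_k$ as above for $k>k_0$ and set $a_k=0$ for $k<k_0$. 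This term is harmless in $\ell^q$ as long as $k_0$ is taken very negative, because its size is $2^{k_0d/2}\|h\|_\infty$. We can also simply bound it directly. Since $h\geqslant H(2^{k_0})$ on the ball of radius $2^{k_0}$, there is a relation with the Lorentz norm below; alternatively, pick $k_0$ so that $2^{k_0d/2}\|h\|_\infty\leqslant\|h\|_{2,q}$, which is possible since $h$ is bounded and $\|h\|_{2,q}>0$ (the case $h=0$ is trivial).

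The main step is the $\ell^q$ bound. We use the standard discretization of the Lorentz quasi-norm for radial decreasing functions. With $h^*=h$, we have $h^*(t)=H((t/\omega_d)^{1/d})$ as a function of volume, and
\begin{equation*}
\|h\|_{2,q}^q\simeq\int_0^\infty\bigl(t^{1/2}h^*(t)\bigr)^q\frac{\md t}{t}.
\end{equation*}
Split the integral over volumes $t\in[\omega_d2^{(k-2)d},\omega_d2^{(k-1)d}]$, which correspond to radii in $[2^{k-2},2^{k-1}]$. On this range $h^*(t)\geqslant H(2^{k-1})$ and $t^{1/2}\simeq 2^{kd/2}$, while $\md t/t$ integrates to $d\log 2$. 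Hence
\begin{equation*}
\int_{\omega_d2^{(k-2)d}}^{\omega_d2^{(k-1)d}}\bigl(t^{1/2}h^*(t)\bigr)^q\frac{\md t}{t}\geqslant c_{d,q}\bigl(2^{kd/2}H(2^{k-1})\bigr)^q=c_{d,q}e^{-\pi q}a_k^q.
\end{equation*}
These intervals are disjoint over $k$, so summing gives $\sum_k a_k^q\leqslant C_{d,q}\|h\|_{2,q}^q$.

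The same argument controls the collapsed term at $k_0$ by $\|h\|_{2,q}$, through the alternative choice of $k_0$ above. Alternatively, shells with $k\leqslant k_0$ can be absorbed by noting that $\sum_{k\leqslant k_0}a_k\phi_{2^k}$ already dominates $h$ there. We then truncate at $k_0$ with the extra term, whose coefficient $e^\pi2^{k_0d/2}\|h\|_\infty$ is at most $e^{\pi}\|h\|_{2,q}$ by the choice of $k_0$.

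The only delicate point is keeping the radius shift (using $H(2^{k-1})$ on the shell $[2^{k-1},2^k)$ and integrating $h^*$ over the previous shell) consistent, so that monotonicity goes in the right direction. The rest is bookkeeping of constants depending only on $d$ and $q$.
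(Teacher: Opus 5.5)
Your proof is correct. The majorant is exactly the paper's:
\begin{itemize}
\item the same coefficients $a_k=e^{\pi}2^{kd/2}H(2^{k-1})$ on the shells $2^{k-1}\leqslant|x|<2^k$;
\item the same collapse of all small scales into the single term $e^{\pi}2^{k_0d/2}\|h\|_\infty\phi_{2^{k_0}}$, with $k_0$ chosen so that this coefficient is at most a constant times $\|h\|_{2,q}$.
\end{itemize}
Of the options you float for $k_0$, only this last choice is needed. The remark that $h\geqslant H(2^{k_0})$ on the small ball plays no role.

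Where you differ from the paper is the $\ell^q$ bound. The paper uses the distribution-function form $\|h\|_{2,q}^q=2\int_0^\infty\lambda^{q-1}d_h(\lambda)^{q/2}\,\md\lambda$. It writes $\sum_k a_k^q$ as a layer-cake integral in $\lambda$. It then observes that every $k$ with $H(2^{k-1})>\lambda$ satisfies $\omega_d2^{(k-1)d}\leqslant d_h(\lambda)$, and dominates the resulting geometric sum by its top term.

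You instead use the equivalent form $\int_0^\infty(t^{1/2}h^*(t))^q\,\frac{\md t}{t}$. You bound each $a_k^q$ by the contribution of one dyadic $t$-interval, and these intervals are disjoint. This avoids the geometric series and the interchange of sum and integral. The cost is that you invoke the rearrangement formula for the Lorentz quasi-norm and identify $h^*(t)$ with $H((t/\omega_d)^{1/d})$.

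Strictly, the one-dimensional rearrangement is right-continuous, so $h^*(t)$ equals the right limit of $H$ at the radius $(t/\omega_d)^{1/d}$. Your identity therefore holds only off the countably many jumps of $H$. This is harmless: all you use is the lower bound $h^*(t)\geqslant H(2^{k-1})$. That bound holds at every point of your interval except its right endpoint, since the right limit of $H$ at any radius below $2^{k-1}$ is still at least $H(2^{k-1})$.
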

\begin{proof}
	For $h=0$ take all coefficients to be zero. Otherwise, let
	$M:=\|h\|_\infty$ and write
	\begin{equation*}
		d_h(\lambda):=|\{x\in\R^d:h(x)>\lambda\}|,
		\qquad \lambda>0.
	\end{equation*}
	\begin{equation*}
		\|h\|_{2,q}^q
		=2\int_0^\infty\lambda^{q-1}d_h(\lambda)^{\frac{q}{2}}\md{\lambda}.
	\end{equation*}

    Choose a sufficiently small $k_0\in\Z$ such that $e^\pi2^{\frac{k_0d}{2}}M\leqslant\|h\|_{2,q}$ and choose $K\in\Z$, $K>k_0$, such that $h=0$ almost everywhere outside
	$B(0,2^K)$. Define
	\begin{equation*}
		a_{k_0}:=e^\pi2^{\frac{k_0d}{2}}M,
		\qquad
		a_k:=e^\pi2^{\frac{kd}{2}}H(2^{k-1})
		\quad(k_0+1\leqslant k\leqslant K),
	\end{equation*}
	and set $a_k=0$ for all remaining indices.
	
	For $0\leqslant |x| < 2^{k_0}$,  we have
	\begin{equation*}
		a_{k_0}\phi_{2^{k_0}}(x)
	=e^\pi M e^{-\pi\frac{|x|^2}{2^{2k_0}}}
		\geqslant M\geqslant h(x)
		\quad\text{almost everywhere}.
	\end{equation*}
	For $2^{k-1}\leqslant|x|<2^k$, where $k_0+1\leqslant k\leqslant K$, monotonicity of
	$H$ gives
	\begin{equation*}
		a_k\phi_{2^k}(x)
		=e^\pi H(2^{k-1})e^{-\pi\frac{|x|^2}{2^{2k}}}
		\geqslant H(2^{k-1})\geqslant H(|x|).
	\end{equation*}
	Thus $h\leqslant\sum_k a_k\phi_{2^k}$ almost everywhere.
	
	It remains to estimate $\|(a_k)\|_{\ell^q}$. For $\lambda>0$, let
	\begin{equation*}
		I_\lambda:=\{k\in\Z:k_0+1\leqslant k\leqslant K,\ H(2^{k-1})>\lambda\}.
	\end{equation*}
	If $k\in I_\lambda$, then $B(0,2^{k-1})\subset\{h>\lambda\}$ up to a
	null set, and hence $2^{(k-1)d}\omega_d\leqslant d_h(\lambda)$.
	If $I_\lambda$ is nonempty, let $m:=\max I_\lambda$. Summing a geometric
	series, we obtain
	\begin{align*}
		\sum_{k\in I_\lambda}2^{k\frac{dq}{2}}&\leqslant\sum_{k=-\infty}^m2^{k\frac{dq}{2}}
		=\frac{2^{m\frac{dq}{2}}}{1-2^{-\frac{dq}{2}}}\leqslant C_{d,q} d_h(\lambda)^{\frac{q}{2}}.
	\end{align*}
	The same bound holds when $I_\lambda$ is empty. By Tonelli's theorem,
	\begin{align*}
		\sum_{k=k_0+1}^K a_k^q&=e^{\pi q}\sum_{k=k_0+1}^K
		2^{k\frac{dq}{2}}H(2^{k-1})^q=e^{\pi q}q\int_{0}^\infty \lambda^{q-1}\sum_{k=k_0+1}^K2^{k\frac{dq}{2}}1_{\{H(2^{k-1})>\lambda\}}d\lambda\\
		&=C_{d,q}\int_0^\infty\lambda^{q-1}
		\sum_{k\in I_\lambda}2^{k\frac{dq}{2}}\md{\lambda}\leqslant
		C_{d,q}\int_0^\infty\lambda^{q-1}d_h(\lambda)^{\frac{q}{2}}\md{\lambda}=C_{d,q}
		\|h\|_{2,q}^q.
	\end{align*}
	Since $a_{k_0}^q\leqslant\|h\|_{2,q}^q$, we conclude that
	\begin{equation*}
		\|(a_k)\|_{\ell^q}^q=a_{k_0}^q+\sum_{k=k_0+1}^K a_k^q\leqslant C_{d,q}\|h\|_{2,q}^q.
	\end{equation*}
	Taking the $q$-th root proves the result.
\end{proof}

The following lemma is a dyadic summation estimate for the functions
$P_r$ introduced in Lemma \ref{lem:gaussian-pair}, which will be needed later.

\begin{lemma}\label{lem:profiles}
	Let $d\geqslant1$ and $2<q<\infty$, and set
	\begin{equation*}
		\alpha=d\left(\frac{1}{2}-\frac{1}{q}\right).
	\end{equation*}
	Let $P_r$ be as in Lemma \ref{lem:gaussian-pair}
	with this value of $\alpha$.
	There is a constant $C_{d,q}>0$ such that, for every
	finitely supported sequence $(b_k)_{k\in\Z}$,
	\begin{equation}\label{eq:profile-synthesis}
		\left\|\sum_k b_kP_{2^k}\right\|_{L^q(\R^d)}
		\leqslant C_{d,q}\|(b_k)\|_{\ell^q}.
	\end{equation}
\end{lemma}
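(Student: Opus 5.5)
We need $\|\sum_k b_kP_{2^k}\|_q\lesssim\|b\|_{\ell^q}$ with $P_r(x)=r^{\alpha-d/2}(1+|x|^2/r^2)^{-(d-\alpha)/4}$. Set $\beta:=\frac{d-\alpha}{2}$. Since $\alpha-\frac d2=-\frac dq$, we have
$$P_{2^k}(x)=2^{-kd/q}\bigl(1+2^{-2k}|x|^2\bigr)^{-\beta/2}.$$
Compute $\beta=\frac d2\bigl(\frac12+\frac1q\bigr)=\frac d4+\frac{d}{2q}$, so $\beta>\frac dq$ exactly because $q>2$. This gap drives the argument. Replacing $b_k$ by $|b_k|$, we may assume $b_k\ge0$.

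The plan is a pointwise domination followed by a dyadic-annulus computation. Fix $x$ with $2^{j-1}\le|x|<2^j$ (and treat $x=0$ separately or ignore it as a null set). We have
$$P_{2^k}(x)\lesssim 2^{-kd/q}\min\bigl(1,2^{(k-j)\beta}\bigr).$$
Hence, for $x$ in the annulus $A_j$,
$$\sum_k b_kP_{2^k}(x)\lesssim 2^{-jd/q}\sum_k b_k\,2^{(j-k)d/q}\min\bigl(1,2^{(k-j)\beta}\bigr)=2^{-jd/q}\,(b*K)_j,$$
where $K_m=2^{md/q}$ for $m\le0$ (i.e.\ $k\ge j$) and $K_m=2^{m(d/q-\beta)}$ for $m>0$ (i.e.\ $k<j$). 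Here $m=j-k$. Both branches decay geometrically: the first as $m\to-\infty$ with rate $d/q>0$, the second as $m\to+\infty$ because $\beta>d/q$. Therefore $K\in\ell^1(\mathbb Z)$.

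Then
$$\Bigl\|\sum_kb_kP_{2^k}\Bigr\|_q^q=\sum_j\int_{A_j}\Bigl(\sum_k b_kP_{2^k}\Bigr)^q\lesssim\sum_j|A_j|\,2^{-jd}\,(b*K)_j^q\lesssim_d\|b*K\|_{\ell^q}^q.$$
By Young's inequality, $\|b*K\|_{\ell^q}\le\|K\|_{\ell^1}\|b\|_{\ell^q}$, which gives \eqref{eq:profile-synthesis}.

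The only step that needs care is verifying $\beta>d/q$, that is, $\frac{d}{4}>\frac{d}{2q}$, which is equivalent to $q>2$. This is where the hypothesis enters, and it ensures summability of the tail $k<j$ of large scales relative to $|x|$. The remaining steps are routine bookkeeping.
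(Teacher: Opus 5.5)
Your proof is correct, but it recovers the $\ell^q$ norm differently from the paper. The paper never partitions $x$-space. It uses the same two-sided pointwise bound on $P_{2^k}(x)$ to show $\sum_kP_{2^k}(x)\leqslant C_{d,q}|x|^{\alpha-\frac{d}{2}}$, and then applies H\"older's inequality in $k$ with weights $P_{2^k}(x)$,
\[
\Bigl|\sum_k b_kP_{2^k}(x)\Bigr|^q\leqslant\Bigl(\sum_k|b_k|^qP_{2^k}(x)\Bigr)\Bigl(\sum_kP_{2^k}(x)\Bigr)^{q-1}.
\]
It finishes with Tonelli, because $\int_{\R^d}P_{2^k}(x)|x|^{-\frac{d}{2}-\alpha}\,\md{x}$ is finite and, by scaling, independent of $k$; this is a Schur-test type argument.

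You instead freeze $x$ on the annuli $A_j$, reduce to the discrete convolution $b*K$ on $\Z$, and apply Young's inequality. Both proofs rest on the same decay. Your kernel decays at rate $d/q$ as $m\to-\infty$ and at rate $\beta-\frac{d}{q}=\frac{\alpha}{2}$ as $m\to+\infty$. These match exactly the convergence at $0$ and at $\infty$ of $\int_0^\infty(1+u^2)^{-\frac{d-\alpha}{4}}u^{\frac{d}{2}-\alpha-1}\,\md{u}$ in the paper, i.e.\ the conditions $q<\infty$ and $q>2$. Likewise, $\|K\|_{\ell^1}<\infty$ is the discrete form of the paper's bound on $\sum_kP_{2^k}(x)$.

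Your version makes the convolution structure in the scale parameter explicit and needs only pointwise bounds that are uniform on annuli. The paper's version avoids the annulus bookkeeping by exploiting the exact scale invariance of the weighted integral.
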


\begin{proof}
Fix $x\ne0$ and choose $n\in\Z$ such that
	$2^n\leqslant|x|<2^{n+1}$.
	By the definition of $P_r$,
	\begin{equation*}
		P_{2^k}(x)=2^{k\alpha-\frac{dk}{2}}\left(1+\frac{|x|^2}{2^{2k}}\right)^{-\frac{d-\alpha}{4}}\leqslant
		\begin{cases}
			2^{\frac{k\alpha}{2}}|x|^{-\frac{d-\alpha}{2}},
			& k\leqslant n,\\
			2^{k(\alpha-\frac{d}{2})},
			& k\geqslant n+1.
		\end{cases}
	\end{equation*}
	Summing the two geometric series gives
	\begin{align*}
		\sum_{k\in\Z}P_{2^k}(x)\leqslant
		|x|^{-\frac{d-\alpha}{2}}
		\sum_{k\leqslant n}2^{\frac{k\alpha}{2}}
		+\sum_{k\geqslant n+1}2^{k(\alpha-\frac{d}{2})}\leqslant C_{d,q}|x|^{\alpha-\frac{d}{2}}.
	\end{align*}
	By H\"older's inequality,
	\begin{align*}
		\left|\sum_k b_kP_{2^k}(x)\right|^q\leqslant
		\left(\sum_k|b_k|^qP_{2^k}(x)\right)
		\left(\sum_{k\in\Z}P_{2^k}(x)\right)^{q-1}\leqslant
		C_{d,q}\sum_k|b_k|^qP_{2^k}(x)
		|x|^{-\frac{d}{2}-\alpha},
	\end{align*}
	where we used $(q-1)\left(\alpha-\frac{d}{2}\right)
	=-\frac{d}{2}-\alpha$.
	
	Integrating in $x$, applying Tonelli's theorem,
	and using polar coordinates, we obtain
	\begin{align*}
		\left\|\sum_k b_kP_{2^k}\right\|_{L^q(\R^d)}^q
		&\leqslant
		C_{d,q}\sum_k|b_k|^q
		\int_{\R^d}P_{2^k}(x)|x|^{-\frac{d}{2}-\alpha}\md{x}\\
		&=C_{d,q}|\mathbb S^{d-1}|
		\sum_k|b_k|^q2^{k(\alpha-\frac{d}{2})}
		\int_0^\infty
		\left(1+\frac{\rho^2}{2^{2k}}\right)^{-\frac{d-\alpha}{4}}
		\rho^{\frac{d}{2}-\alpha-1}\md{\rho}\\
		&=C_{d,q}|\mathbb S^{d-1}|
		\sum_k|b_k|^q
		\int_0^\infty
		(1+u^2)^{-\frac{d-\alpha}{4}}
		u^{\frac{d}{2}-\alpha-1}\md{u}.
	\end{align*}
		Since $2<q<\infty$, we have $0<\alpha<\frac{d}{2}$ and hence
	\begin{align*}
		\int_0^\infty
		(1+u^2)^{-\frac{d-\alpha}{4}}
		u^{\frac{d}{2}-\alpha-1}\md{u}\leqslant
		\int_0^1u^{\frac{d}{2}-\alpha-1}\md{u}
		+\int_1^\infty u^{-1-\frac{\alpha}{2}}\md{u}<\infty.
	\end{align*}
	Consequently,
	\begin{equation*}
		\left\|\sum_k b_kP_{2^k}\right\|_{L^q(\R^d)}^q
		\leqslant C_{d,q}\sum_k|b_k|^q.
	\end{equation*}
	Taking the $q$-th root proves \eqref{eq:profile-synthesis}.
\end{proof}

\subsection{Sharpness of the Lorentz indices}
Recall that $d\geqslant3$ and $p^*=\frac{2d}{d-2}$.

\begin{proposition}\label{prop:lorentz-sharpness}
	For $0<r_1,r_2\leqslant\infty$, suppose that
	\begin{equation}\label{eq:lorentz-general}
		\|W_{BJ}(f,g)\|_{p^*}
		\leqslant C\|f\|_{2,r_1}\|g\|_{2,r_2}
	\end{equation}
	holds for all bounded, compactly supported functions $f$ and $g$. Then 
	\begin{equation*}
		r_1\leqslant p^*,\qquad r_2\leqslant p^*.
	\end{equation*}
	Thus the Lorentz indices in the endpoint estimate are sharp.
\end{proposition}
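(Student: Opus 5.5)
The plan is to argue by contradiction with truncated radial power functions, placing the multi-scale structure in one variable only. To show $r_1\leqslant p^*$, I would fix a single nice function for $g$, for instance $g=1_{B(0,1)}$ or a Gaussian $\phi_1$. I would then let $f$ run through a family of the form
\begin{equation*}
	f_N(x)=\sum_{k=1}^{N}c_k\,2^{\frac{kd}{2}}1_{\{2^{-k}\leqslant|x|<2^{-k+1}\}}(x),
\end{equation*}
with nonnegative coefficients $c_k$ to be chosen. This is a dyadic model of $|x|^{-d/2}w(\log_2\frac{1}{|x|})$ on $2^{-N}\leqslant|x|\leqslant1$. The function is bounded, compactly supported and essentially radially decreasing when $c_k$ is slowly varying. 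A direct computation from the distribution function (as in the proof of Lemma \ref{lem:majorant}) gives
\begin{equation*}
	\|f_N\|_{2,r_1}\simeq\|(c_k)_{k\leqslant N}\|_{\ell^{r_1}}.
\end{equation*}
The claim $r_2\leqslant p^*$ then follows from the symmetry relation \eqref{eq:reflection}. That relation gives $|W_{BJ}(f,g)|=|W_{BJ}(g,f)|$, so the roles of the two variables can be exchanged.

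The heart of the argument is a lower bound of the form
\begin{equation*}
	\|W_{BJ}(f_N,g)\|_{p^*}\gtrsim\|(c_k)_{k\leqslant N}\|_{\ell^{p^*}}.
\end{equation*}
Heuristically this holds because the small-$\tau$ part of the average couples $g$ at unit scale with $f$ at scale $\tau$. Concretely, in $W_\tau(f,g)(x,\xi)$ with $|x|\lesssim2^{-k}$ small, the constraint $|x-(1-\tau)y|<1$ forces $|y|\lesssim1$. Hence $f$ is sampled at $|x+\tau y|\lesssim\tau+|x|$. For $\tau\simeq2^{-k}$ and $|x|\lesssim2^{-k}$, only the $k$-th shell contributes substantially. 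That contribution has size about $c_k2^{kd/2}\cdot\tau^{-d}2^{-kd}\simeq c_k2^{kd/2}$, obtained by integrating over $y$ in a region of volume $\simeq\tau^{-d}2^{-kd}$, which is $\simeq1$. This holds for $|\xi|\lesssim1$, where the phase $e^{-2\pi i\xi\cdot y}$ does not oscillate.

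After integrating over $\tau\in[2^{-k},2^{-k+1}]$, the piece has size about $c_k2^{kd/2}2^{-k}=c_k2^{k(\frac{d}{2}-1)}$. Its phase-space support is $|x|\lesssim2^{-k}$, $|\xi|\lesssim1$, of measure about $2^{-kd}$. Its $L^{p^*}$-norm to the power $p^*$ is therefore about
\begin{equation*}
	c_k^{p^*}2^{k(\frac{d}{2}-1)p^*}2^{-kd}=c_k^{p^*},
\end{equation*}
since $(\frac{d}{2}-1)p^*=d$. This is exactly the critical scaling. Summing over disjoint dyadic annuli in $x$ then gives the $\ell^{p^*}$ lower bound, provided cross-scale interference can be controlled. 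To control it, I would restrict to a region such as $2^{-k-1}\leqslant|x|\leqslant2^{-k}$, $|\xi|\leqslant\epsilon$. On that region I would show that the contributions from $\tau\gg2^{-k}$ and $\tau\ll2^{-k}$ are either of lower order or carry the same sign. Positivity should come from taking $g$ and $f_N$ nonnegative and $|\xi|$ small, so that the real part of the integrand stays positive.

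Finally, I would choose $c_k=k^{-1/p^*}$ (or $k^{-\beta}$ with $1/r_1<\beta\leqslant1/p^*$). If $r_1>p^*$, then $\|c\|_{\ell^{r_1}}$ stays bounded while $\|c\|_{\ell^{p^*}}\simeq(\log N)^{1/p^*}\to\infty$. This contradicts \eqref{eq:lorentz-general}, with $\|g\|_{2,r_2}$ fixed and finite for every $r_2$ since $g$ is bounded with compact support. The case $r_1=\infty$ is covered by $c_k\equiv1$.

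The main obstacle is the rigorous lower bound in the second paragraph. The oscillatory factor and the overlap of different $\tau$-scales must be controlled so that $|W_{BJ}(f_N,g)|$ is genuinely $\gtrsim c_k2^{k(\frac{d}{2}-1)}$ on a set of measure $\gtrsim2^{-kd}$, rather than merely bounded above by that. I would first try to make everything explicit by choosing $g=\phi_1$ and replacing the shells by Gaussians $\phi_{2^{-k}}$. The closed formula for $W_\tau(\phi_r,\phi_s)$ derived in the proof of Lemma \ref{lem:gaussian-pair} then applies, and at $\xi=0$ all terms are positive, which gives a clean lower bound near $\xi=0$. I would finally transfer back to a nonnegative truncated power function, with Lorentz norm comparable to $\|c\|_{\ell^{r_1}}$ by Lemma \ref{lem:majorant} and its converse.
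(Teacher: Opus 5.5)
Your strategy is the paper's: a truncated radial power function of critical homogeneity $|x|^{-d/2}$ in the first slot, $g=1_{B(0,1)}$ in the second, the reflection identity \eqref{eq:reflection} for $r_2$, and the scaling identity $(\frac d2-1)p^*=d$. The paper uses the continuous analogue of $c_k\equiv1$, namely $f_N=(\max\{|x|,2^{-N}\})^{-d/2}1_{B(0,1)}$, and compares $N^{1/p^*}$ with $N^{1/r_1}$. This handles every $r_1\in(p^*,\infty]$ at once, so your weights $c_k=k^{-1/p^*}$ are harmless but unnecessary.

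\textbf{The gap.} As written, the proposal leaves the lower bound unproved, and that bound is the whole content of the proof. You treat oscillation and interference between $\tau$-scales as the main obstacle, but that obstacle does not exist. The positivity you mention in passing settles it completely, and this is exactly the paper's argument.

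Since $f_N,g\geqslant0$ are supported in $B(0,1)$, the integrand of $W_\tau(f_N,g)(x,\xi)$ vanishes unless $|y|=|(x+\tau y)-(x-(1-\tau)y)|\leqslant2$. Hence for $|\xi|\leqslant\frac1{12}$ one has $\cos(2\pi\xi\cdot y)\geqslant\frac12$ for \emph{every} $\tau\in[0,1]$ and every relevant $y$. So the real part of the integrand defining $W_{BJ}(f_N,g)$ is pointwise nonnegative, and you may discard everything except $0\leqslant\tau\leqslant|x|$, $|y|\leqslant\frac12$. For $|x|\leqslant\frac14$, this region gives $g(x-(1-\tau)y)=1$ and $\frac12|x|\leqslant|x+\tau y|\leqslant\frac32|x|$. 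Therefore
\begin{equation*}
\operatorname{Re}W_{BJ}(f_N,g)(x,\xi)\geqslant\frac{\omega_d}{2^{d+1}}\,|x|\inf_{\frac12|x|\leqslant|z|\leqslant\frac32|x|}f_N(z),\qquad|\xi|\leqslant\tfrac1{12}.
\end{equation*}
For your shell functions with $c_{k\pm1}\simeq c_k$, this is $\gtrsim c_k2^{k(\frac d2-1)}$ on $\{2^{-k}\leqslant|x|<2^{-k+1},\ |\xi|\leqslant\frac1{12}\}$ for $3\leqslant k\leqslant N-1$. That is your heuristic made rigorous, and it gives $\|W_{BJ}(f_N,g)\|_{p^*}^{p^*}\gtrsim\sum_{k=3}^{N-1}c_k^{p^*}$.

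\textbf{The Gaussian fallback.} You should drop it, for three reasons:
\begin{enumerate}
\item The hypothesis is only assumed for bounded, compactly supported functions, so $\phi_1$ and $\phi_{2^{-k}}$ are not admissible without an extra approximation argument.
\item Positivity at the single point $\xi=0$ does not give a lower bound on a $\xi$-set of positive measure uniformly in the scales. The closed formula carries the $\tau$- and scale-dependent phase $e^{2\pi ibx\cdot\xi}$, which is exactly the problem that compact support removes.
\item Lemma~\ref{lem:majorant} only produces pointwise majorants. These cannot be pushed through $W_{BJ}$ to ``transfer back'' to truncated power functions, because $W_{BJ}$ is not monotone in its arguments.
\end{enumerate}
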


\begin{proof}
	By the symmetry relation \eqref{eq:reflection}, we have
	\begin{equation*}
		W_{BJ}(f,g)=\overline{W_{BJ}(g,f)}.
	\end{equation*}
	Therefore it suffices to prove that $r_1\leqslant p^*$. For an integer $N\geqslant3$, define
	\begin{equation*}
		f_N(x):=\bigl(\max\{|x|,2^{-N}\}\bigr)^{-\frac d2}
		1_{B(0,1)},\qquad g(x):=1_{B(0,1)}.
	\end{equation*}
	Both functions are bounded and compactly supported. A direct computation gives
	\begin{equation*}
		d_{f_N}(\lambda)=|\{x: f_N(x)> \lambda\}|=
		\begin{cases}
			\omega_d,&0<\lambda<1,\\[1mm]
			\omega_d\lambda^{-2},&1\leqslant\lambda<2^{\frac{Nd}{2}},\\[1mm]
			0,&\lambda\geqslant2^{\frac{Nd}{2}}.
		\end{cases}
	\end{equation*}
	For $0<r_1<\infty$, the definition of the Lorentz quasi-norm gives
	\begin{align*}
		\|f_N\|_{2,r_1}^{r_1}=2\int_0^\infty \lambda^{r_1-1}d_{f_N}(\lambda)^{\frac{r_1}{2}}\md \lambda=2\omega_d^{\frac{r_1}{2}}\int_0^1
		\lambda^{r_1-1}\md{\lambda}
		+2\omega_d^{\frac{r_1}{2}}
		\int_1^{2^{\frac{Nd}{2}}}\frac{\md{\lambda}}{\lambda}=\omega_d^{\frac{r_1}{2}}
		\left(\frac{2}{r_1}+Nd\log2\right).
	\end{align*}
	Consequently, $\|f_N\|_{2,r_1}\leqslant C_{d,r_1}N^{\frac1{r_1}}$.
	If $r_1=\infty$, then 
	\begin{equation*}
		\|f_N\|_{2,\infty}
		=\sup_{\lambda>0}\lambda d_{f_N}(\lambda)^{\frac12}
		=\omega_d^{\frac12}.
	\end{equation*}
	Moreover, $\|g\|_{2,r_2}$ is finite and independent of $N$.
	
	We next obtain a lower bound for the Born-Jordan distribution. Since
	$f_N$ and $g$ are nonnegative and real-valued,
	\begin{align*}
		\operatorname{Re}W_{BJ}(f_N,g)(x,\xi)=\int_0^1\int_{\R^d}
		\cos(2\pi\xi\cdot y)
		f_N(x+\tau y)g(x-(1-\tau)y)\md{y}\md{\tau}.
	\end{align*}
	Whenever the integrand is nonzero, we have $x+\tau y, x-(1-\tau)y \in B(0,1)$, and therefore
	\begin{equation*}
		|y|
		=|(x+\tau y)-(x-(1-\tau)y)|
		\leqslant2.
	\end{equation*}
	Hence, if $|\xi|\leqslant\frac1{12}$, then $|2\pi\xi\cdot y|\leqslant\frac\pi3$ and  $\cos(2\pi\xi\cdot y)\geqslant\frac12$. Now we set
	\begin{equation*}
	 \Omega_1=\left\{(x,\xi)\in \R^d\times \R^d: 2^{-N}\leqslant|x|\leqslant\frac14,
	 	\,|\xi|\leqslant\frac1{12}\right\},
	\end{equation*}
	and, for fixed $(x,\xi)\in \Omega_1$,
	\begin{equation*}
    \Omega_2=\left\{(\tau, y)\in \R\times \R^d: 0\leqslant\tau\leqslant |x|,\, |y|\leqslant\frac{1}{2}\right\}.
	\end{equation*}
	If $(\tau, y)\in \Omega_2$, we have $|x+\tau y|\leqslant \frac{3}{2}|x|$ and $|x-(1-\tau)y|\leqslant \frac{3}{4}$. Therefore for $(x,\xi)\in \Omega_1$,
	\begin{align*}
		\operatorname{Re}W_{BJ}(f_N,g)(x,\xi)&\geqslant\int_0^{|x|}\int_{|y|\leqslant \frac{1}{2}}\cos(2\pi\xi\cdot y)f_N(x+\tau y)g(x-(1-\tau)y)\md{y}\md{\tau}\\
		&\geqslant \frac{1}{2}\int_0^{|x|}\int_{|y|\leqslant\frac{1}{2}}\left(\frac{3}{2}|x|\right)^{-\frac{d}{2}}\md y\md \tau \geqslant c_d |x|^{1-\frac{d}{2}}.
	\end{align*}
	Therefore $|W_{BJ}(f_N,g)(x,\xi)|\geqslant c_d|x|^{1-\frac{d}{2}}$ on $\Omega_1$. Using 
 that $(1-\frac d2)p^*=-d$, we obtain
	\begin{align*}
		\|W_{BJ}(f_N,g)\|_{p^*}^{p^*}
		&\geqslant c_d
		\int_{|\xi|\leqslant\frac1{12}}
		\int_{2^{-N}\leqslant|x|\leqslant\frac14}
		|x|^{-d}\md{x}\md{\xi}\\
		&\geqslant c_d\int_{2^{-N}}^{\frac14}\frac{\md{\rho}}{\rho}
		=c_d(N-2)\log2,
	\end{align*}
	which implies $\|W_{BJ}(f_N,g)\|_{p^*}\geqslant c_dN^{\frac1{p^*}}$.
	
	Assume first that $r_1<\infty$. Applying \eqref{eq:lorentz-general}
	to $(f_N,g)$, we obtain
	\begin{equation*}
		N^{\frac1{p^*}}
		\leqslant C_{d,r_1,r_2}N^{\frac1{r_1}}.
	\end{equation*}
	Letting $N\to\infty$ gives
	\begin{equation*}
		\frac1{p^*}\leqslant\frac1{r_1},
	\end{equation*}
	and therefore $r_1\leqslant p^*$. If $r_1=\infty$, then we obtain $N^{1/p^*}\leqslant C_{d,r_2}$, which is impossible as $N\to\infty$. Thus $r_1\leqslant p^*$ in all cases.
\end{proof}

\section{Weighted even-moment estimates}\label{sec:even-moment}

For $v\in L^1(0,\frac{1}{2})$ and $f,g\in L^2(\R^d)$, set
\begin{equation*}
	Q_v(f,g):=\int_0^{\frac{1}{2}}v(\tau)W_\tau(f,g)\md{\tau}.
\end{equation*}
By Lemma \ref{lem:moyal}, this integral is well-defined as an
$L^2(\R^{2d})$-valued integral.

\begin{theorem}[Weighted even-moment estimates]\label{thm:weighted}
	Let $d\geqslant3$ and let $q\geqslant4$ be an even integer. Set
	$\alpha=d(\frac{1}{2}-\frac{1}{q})$. Suppose $v:(0,\frac{1}{2})\to\C$ is measurable and
	\begin{equation*}
		|v(\tau)|\leqslant M\tau^{\alpha-1},
		\quad\text{almost everywhere for some }M\geqslant0.
	\end{equation*}
	Then $Q_v$ has a unique bounded sesquilinear extension
	\begin{equation*}
		Q_v:L^{2,q}(\R^d)\times L^{2,q}(\R^d)\longrightarrow L^q(\R^{2d}).
	\end{equation*}
	For every $f,g\in L^{2,q}(\R^d)$, this extension satisfies
	\begin{equation*}
		\|Q_v(f,g)\|_q\leqslant C_{d,q}M\|f\|_{2,q}\|g\|_{2,q}.
	\end{equation*}
\end{theorem}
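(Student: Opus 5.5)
Our plan is to prove the estimate first for bounded, compactly supported $f,g$, and then extend by density of such functions in $L^{2,q}(\R^d)$. On this dense class $Q_v(f,g)$ is already defined as an $L^2$-valued integral by Lemma \ref{lem:moyal}, since $\tau^{\alpha-1}\in L^1(0,\frac12)$. Sesquilinearity and the a priori bound then give a unique continuous extension. Write $q=2m$ with $m\geqslant2$ an integer. We will expand the even moment as
\begin{equation*}
 \|Q_v(f,g)\|_q^q=\int_{\R^{2d}}\bigl(Q_v(f,g)\overline{Q_v(f,g)}\bigr)^m\md{x}\md{\xi}.
\end{equation*}
Inserting the definition \eqref{eq:wtau} turns this into an integral over $\tau_1,\dots,\tau_q\in(0,\frac12)$, $y_1,\dots,y_q\in\R^d$ and $(x,\xi)$. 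The $\xi$-integration of the product of the phases $e^{\mp2\pi i\xi\cdot y_j}$ produces the delta constraint $\sum_j\pm y_j=0$.

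To keep every step absolutely convergent, we will regularize before integrating in $\xi$. One option is to insert a Gaussian factor $e^{-\epsilon\pi|\xi|^2}$ and let $\epsilon\to0$ by monotone convergence at the end; the delta is then replaced by a Gaussian in $\sum\pm y_j$. The factor $e^{-\epsilon\pi|\xi|^2}$ is $|\widehat{\cdot}|$-friendly, being a Gaussian, i.e.\ symmetric decreasing.

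After the regularization, we bound $|v(\tau_j)|\leqslant M\tau_j^{\alpha-1}$, move absolute values inside, and fix the $\tau_j$. What remains is a multilinear integral of the form in Lemma \ref{lem:bll}. Its functions are $|f|$ and $|g|$, each evaluated at linear combinations $x+\tau_jy_j$ and $x-(1-\tau_j)y_j$, together with the symmetric decreasing Gaussian in $\sum\pm y_j$. Lemma \ref{lem:bll} lets us replace $|f|,|g|$ by $f^*,g^*$.

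The main obstacle is that taking absolute values in the phases destroys the oscillation. It is exactly this cancellation that produces the $L^q$ decay in $\xi$. We therefore do \emph{not} take absolute values of the phases. Instead we apply the rearrangement inequality to the expression \emph{after} the $\xi$-integration, where the oscillation has been converted into the positive Gaussian (or delta) constraint. All remaining factors are then nonnegative, so Lemma \ref{lem:bll} applies legitimately.

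Reversing the computation with $f^*,g^*$ in place of $f,g$, the right-hand side equals the same regularized moment built from the positive kernel. By Lemma \ref{lem:majorant} we dominate $f^*\leqslant\sum a_k\phi_{2^k}$ and $g^*\leqslant\sum b_l\phi_{2^l}$. This requires positivity of the constrained integrand, which holds, so the majorants can be inserted monotonically. Re-expanding, the moment is bounded by $M^q$ times the moment of $\sum_{k,l}a_kb_l\int_0^{1/2}\tau^{\alpha-1}|W_\tau(\phi_{2^k},\phi_{2^l})|\md{\tau}$. Here we use that the Gaussian $W_\tau$ enters via Fourier identities in the positive form. Lemma \ref{lem:gaussian-pair} bounds each term by $C\,P_{2^k}(x)P_{2^{-l}}(\xi)$. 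The double sum therefore factorizes as $\bigl(\sum_ka_kP_{2^k}(x)\bigr)\bigl(\sum_lb_lP_{2^{-l}}(\xi)\bigr)$.

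Lemma \ref{lem:profiles}, applied in $x$ and in $\xi$, bounds the $L^q(\R^{2d})$ norm by $C\|(a_k)\|_{\ell^q}\|(b_l)\|_{\ell^q}\leqslant C\|f\|_{2,q}\|g\|_{2,q}$ via Lemma \ref{lem:majorant}. The delicate point is the step that switches back from the positive kernel to Gaussian Wigner functions. We would verify it carefully by computing the $\xi$-integral exactly for Gaussian inputs, since the constrained kernel is then Gaussian as well.
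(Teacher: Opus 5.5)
Your proposal is correct and follows the paper's proof essentially step by step. The steps are the same: Gaussian regularization in $\xi$; Lemma~\ref{lem:bll} applied only after the $\xi$-integration has produced the positive kernel $H_\delta(\sum_j\sigma_jy_j)$; identification of the result with $\iint e^{-\pi\delta|\xi|^2}|Q_w(f^*,g^*)|^q$; monotone insertion of the majorants from Lemma~\ref{lem:majorant}; then Lemmas~\ref{lem:gaussian-pair} and \ref{lem:profiles}, monotone convergence in $\delta$, and density.

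The step you flag as delicate needs no Gaussian-specific computation. Since $F,G,w$ are real and nonnegative, the positive-kernel integral equals $\iint e^{-\pi\delta|\xi|^2}|Q_w(F,G)|^q$ by running the same Fubini expansion backwards, and the triangle inequality in $k,l$ then applies pointwise. The only item you omit is the routine check that the density extension agrees with the original $L^2$-valued $Q_v$ on $L^2\times L^2$; the paper does this by combining $L^2$ and $L^q$ convergence on sets of finite measure.
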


With $d,q,\alpha$ as in Theorem \ref{thm:weighted}, set
$w(\tau):=\tau^{\alpha-1}$. Since $\alpha>0$, we have
$w\in L^1(0,\frac{1}{2})$, and hence $v\in L^1(0,\frac{1}{2})$.

\begin{proof}[Proof of Theorem \ref{thm:weighted}]
	The assertion is immediate if $M=0$, $f=0$, or $g=0$.
	By replacing $v$ with $\frac{v}{M}$, it suffices to take $M=1$.
	
	We first consider bounded, compactly supported $f$ and $g$. Thus $f,g\in L^2$ and $Q_v(f,g)$ is well-defined. Since rearrangement preserves the Lorentz quasi-norms, Lemma \ref{lem:majorant} gives finite nonnegative Gaussian sums such that
	\begin{equation}\label{eq:input-majorants}
		f^*\leqslant F=\sum_k a_k\phi_{2^k},
		\qquad
		g^*\leqslant G=\sum_l b_l\phi_{2^l},
	\end{equation}
	where $(a_k)_{k\in \Z}$ and $(b_l)_{l\in\Z}$ are finitely supported nonnegative sequences and satisfy
	\begin{equation}\label{eq: a_k b_l}
		\|(a_k)\|_{\ell^q}\leqslant C_{d,q}\|f\|_{2,q},\qquad
		\|(b_l)\|_{\ell^q}\leqslant C_{d,q}\|g\|_{2,q}.
	\end{equation}
	
	Set $\sigma_j=1$ for $1\leqslant j\leqslant \frac{q}{2}$ and
	$\sigma_j=-1$ for $\frac{q}{2}+1\leqslant j\leqslant q$. Write
	$\md{\mathbf y}=\md{y}_1\cdots\md{y}_q$ and
	$\md{\boldsymbol\tau}=\md{\tau}_1\cdots\md{\tau}_q$. Fix $\delta>0$.
	Choose $R>0$ such that $f=g=0$ almost everywhere outside $B(0,R)$.
	If the product in the following integral is nonzero, then
	$|x|\leqslant R$ and $|y_j|\leqslant2R$ for $1\leqslant j\leqslant q$.
	Since $f,g$ are bounded and $v\in L^1(0,\frac{1}{2})$, the Gaussian
	factor in $\xi$ makes the integral absolutely convergent.
	Expanding $Q_v^{\frac{q}{2}}\overline{Q_v}^{\,\frac{q}{2}}$ and using
	Fubini's theorem, we have
    \begin{align*}
    		\iint_{\R^{2d}}& e^{-\pi\delta|\xi|^2}
    	|Q_v(f,g)(x,\xi)|^q\md{x}\md{\xi}=\iint_{\R^{2d}} e^{-\pi\delta|\xi|^2}
    	Q_v^{\frac{q}{2}}(f,g)\overline{Q_v(f,g)}^{\,\frac{q}{2}}\md{x}\md{\xi}\\
    	&=\int_{(0,\frac{1}{2})^q}\int_{\R^{(q+1)d}}\int_{\R^d}e^{-\pi\delta|\xi|^2}
    	e^{-2\pi i\xi\cdot\sum_{j=1}^q\sigma_jy_j}\\
    	&\quad\times\prod_{j=1}^{\frac{q}{2}}
    	\left[v(\tau_j)f(x+\tau_jy_j)
    	\overline{g(x-(1-\tau_j)y_j)}\right]\\
    	&\quad\times\prod_{j=\frac{q}{2}+1}^{q}
    	\left[\overline{v(\tau_j)}\,\overline{f(x+\tau_jy_j)}
    	g(x-(1-\tau_j)y_j)\right]
    	\md{\xi}\md{x}\md{\mathbf y}\md{\boldsymbol\tau}.
    \end{align*}
	Integrating in $\xi$ and using \eqref{eq:gaussian-fourier} give
	\begin{align*}
		\int_{\R^d}e^{-\pi\delta|\xi|^2}
		e^{-2\pi i\xi\cdot\sum_{j=1}^q\sigma_jy_j}\md{\xi}= H_\delta\Bigl(\sum_{j=1}^q \sigma_j y_j\Bigr),\qquad  H_\delta(y):=\delta^{-\frac{d}{2}}e^{-\pi\frac{|y|^2}{\delta}}.
	\end{align*}
	After integrating in $\xi$, we take absolute values and use
	$|v|\leqslant w$ to obtain
	\begin{align*}
		\iint_{\R^{2d}}&e^{-\pi\delta|\xi|^2}
		|Q_v(f,g)(x,\xi)|^q\md{x}\md{\xi}\leqslant\int_{(0,\frac{1}{2})^q}\prod_{j=1}^q w(\tau_j)
		\int_{\R^{(q+1)d}}
		H_\delta\left(\sum_{j=1}^q\sigma_jy_j\right)\\
		&\hspace{25mm}\times
		\prod_{j=1}^q
		|f(x+\tau_jy_j)|\,|g(x-(1-\tau_j)y_j)|
		\md{x}\md{\mathbf y}\md{\boldsymbol\tau}.
	\end{align*}
	Using the Brascamp-Lieb-Luttinger inequality (Lemma \ref{lem:bll}), we obtain
	\begin{align*}
		\iint_{\R^{2d}}&e^{-\pi\delta|\xi|^2}
		|Q_v(f,g)(x,\xi)|^q\md{x}\md{\xi}\leqslant\int_{(0,\frac{1}{2})^q}\prod_{j=1}^q w(\tau_j)
		\int_{\R^{(q+1)d}}
		H_\delta\left(\sum_{j=1}^q\sigma_jy_j\right)\\
		&\hspace{25mm}\times
		\prod_{j=1}^q
		f^*(x+\tau_jy_j)\,g^*(x-(1-\tau_j)y_j)
		\md{x}\md{\mathbf y}\md{\boldsymbol\tau}\\
		&=\iint_{\R^{2d}}e^{-\pi\delta|\xi|^2}
		|Q_w(f^*,g^*)(x,\xi)|^q\md{x}\md{\xi}\leqslant\iint_{\R^{2d}}e^{-\pi\delta|\xi|^2}
		|Q_w(F,G)(x,\xi)|^q\md{x}\md\xi.
	\end{align*}
	For the last inequality, we replace $f^*,g^*$ by $F,G$
	in the preceding nonnegative-kernel integral and then
	reverse the expansion. Thus we have proved
	\begin{equation}\label{eq:moment-comparison}
		\iint_{\R^{2d}}e^{-\pi\delta|\xi|^2}
		|Q_v(f,g)(x,\xi)|^q\md{x}\md{\xi}\leqslant
		\iint_{\R^{2d}}e^{-\pi\delta|\xi|^2}
		|Q_w(F,G)(x,\xi)|^q\md{x}\md{\xi}.
	\end{equation}
	By \eqref{eq:input-majorants} and Lemma \ref{lem:gaussian-pair}, we have
	\begin{align*}
		|Q_w(F,G)(x,\xi)|&\leqslant\sum_{k,l}a_kb_l
		\int_0^{\frac{1}{2}}\tau^{\alpha-1}
		|W_\tau(\phi_{2^k},\phi_{2^l})(x,\xi)|\md{\tau}\\
		&\leqslant C_{d,q}
		\Bigl(\sum_k a_kP_{2^k}(x)\Bigr)
		\Bigl(\sum_l b_lP_{2^{-l}}(\xi)\Bigr).
	\end{align*}
	Applying Tonelli's theorem and Lemma \ref{lem:profiles}, and using \eqref{eq: a_k b_l}, we obtain
	\begin{equation*}
		\begin{split}
			\|Q_w(F,G)\|_q
			&\leqslant C_{d,q}
			\left\|\sum_k a_kP_{2^k}\right\|_{L^q}
			\left\|\sum_l b_lP_{2^{-l}}\right\|_{L^q}\\
			&\leqslant C_{d,q}\|(a_k)\|_{\ell^q}\|(b_l)\|_{\ell^q}\leqslant  C_{d,q}\|f\|_{2,q}\|g\|_{2,q}.
		\end{split}
	\end{equation*}
	This bound is independent of $\delta$. Combining this bound with \eqref{eq:moment-comparison}
	and letting $\delta\downarrow0$, we obtain, by the
	monotone convergence theorem,
	$$
	\left(\iint_{\R^{2d}}|Q_v(f,g)(x,\xi)|^q\md{x}\md{\xi}\right)^{\frac{1}{q}}
	\leqslant C_{d,q}\|f\|_{2,q}\|g\|_{2,q}
	$$
	for bounded and compactly supported $f$ and $g$.
	
	For general $f,g\in L^{2,q}(\R^d)$, set
	\begin{equation*}
		f_n=f\,1_{\{|x|<n\}}1_{\{|f|<n\}},\qquad
		g_n=g\,1_{\{|x|<n\}}1_{\{|g|<n\}}.
	\end{equation*}
	Since $q<\infty$, we have $f_n\to f$ and $g_n\to g$
	in $L^{2,q}(\R^d)$. Moreover, we have
	\begin{equation*}
		\|f_n\|_{2,q}\leqslant\|f\|_{2,q},\qquad
		\|g_n\|_{2,q}\leqslant\|g\|_{2,q}.
	\end{equation*}
	
	By sesquilinearity and the estimate for bounded, compactly supported
	functions,
	\begin{align*}
		\|Q_v(f_n,g_n)-Q_v(f_m,g_m)\|_q\leqslant C_{d,q}
		\bigl(\|f_n-f_m\|_{2,q}\|g_n\|_{2,q}
		+\|f_m\|_{2,q}\|g_n-g_m\|_{2,q}\bigr)
		\longrightarrow0.
	\end{align*}
	Since $L^q(\R^{2d})$ is complete, we may define
	\begin{equation*}
		\widetilde Q_v(f,g):=\lim_{n\to\infty}Q_v(f_n,g_n)
		\quad\text{in }L^q(\R^{2d}).
	\end{equation*}
	The same difference estimate shows that the limit is independent
	of the approximating sequences; sesquilinearity then follows
	by passage to the limit. Taking the limit in the preceding bound for $\|Q_v(f_n,g_n)\|_q$, we obtain
	\begin{equation*}
		\|\widetilde Q_v(f,g)\|_q
		\leqslant C_{d,q}\|f\|_{2,q}\|g\|_{2,q}.
	\end{equation*}
	
	It remains to check that this extension agrees with the original
	$L^2$-valued integral. Let now $f,g\in L^2(\R^d)\subset L^{2,q}(\R^d)$.
	The same truncations also converge in $L^2$. By sesquilinearity of the original map $Q_v$ and Lemma \ref{lem:moyal},
	\begin{align*}
		\|Q_v(f_n,g_n)-Q_v(f,g)\|_2
		&\leqslant\|v\|_1
		\bigl(\|f_n-f\|_2\|g_n\|_2
		+\|f\|_2\|g_n-g\|_2\bigr)
		\longrightarrow0.
	\end{align*}
	For every measurable set $B\subset\R^{2d}$ of finite measure,
	H\"older's inequality gives
	\begin{align*}
		\|Q_v(f,g)-\widetilde Q_v(f,g)\|_{L^1(B)}
		&\leqslant |B|^{\frac{1}{2}}
		\|Q_v(f,g)-Q_v(f_n,g_n)\|_2\\
		&\quad+|B|^{1-\frac{1}{q}}
		\|Q_v(f_n,g_n)-\widetilde Q_v(f,g)\|_q
		\longrightarrow0.
	\end{align*}
	Hence $\widetilde Q_v(f,g)=Q_v(f,g)$ almost everywhere,
	and we denote the extension still by $Q_v$.
	Uniqueness follows from the density of bounded, compactly
	supported functions in $L^{2,q}$.
\end{proof}

\section{Endpoint estimate}\label{sec:proof}

Throughout this section let $d\geqslant3$, $a=\frac{3d}{8}$, $\theta=\frac{1}{a}=\frac{8}{3d}<1$. For $f,g\in L^2(\R^d)$ and $\mathrm{Re} z>0$, define the $L^2$-valued integral
\begin{equation}\label{eq:half-family}
 T_z(f,g)=az\int_0^{\frac{1}{2}}\tau^{az-1}W_\tau(f,g)\md{\tau}.
\end{equation}
This is well defined by Lemma \ref{lem:moyal} and we have
\begin{equation*}
 T_\theta(f,g)=\int_0^{\frac{1}{2}}W_\tau(f,g)\md{\tau}.
\end{equation*}
It follows from the definition of $W_{BJ}$ and the symmetry relation \eqref{eq:reflection} that
\begin{equation}\label{eq:half-reconstruction}
 W_{BJ}(f,g)=T_\theta(f,g)+\overline{T_\theta(g,f)}.
\end{equation}

\subsection{Analytic family of operators and the left boundary estimate}
\begin{proposition}\label{prop:imaginary}
For fixed $f,g\in L^2(\R^d)$, the function
$z\mapsto T_z(f,g)$ is strongly holomorphic in $L^2(\R^{2d})$
on $\mathrm{Re} z>0$. It has an $L^2$-continuous extension to
$\mathrm{Re} z\geqslant0$, with $T_0(f,g)=W_0(f,g)$, and
\begin{equation}\label{eq:half-L2-bound}
 \|T_z(f,g)\|_2
 \leqslant2(1+|az|)^2\|f\|_2\|g\|_2,
 \qquad \mathrm{Re} z\geqslant0.
\end{equation}
\end{proposition}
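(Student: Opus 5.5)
The plan is to move to the symplectic Fourier side, where by \eqref{eq:phase} each $W_\tau(f,g)$ becomes the phase-modulated ambiguity function $e^{2\pi i(\tau-\frac12)y\cdot\eta}A(f,g)(y,\eta)$. Since $\Fs$ is unitary on $L^2(\R^{2d})$ and commutes with the $L^2$-valued $\tau$-integral (a bounded operator applied to a Bochner integral), we get for $\mathrm{Re} z>0$
\begin{equation*}
\Fs T_z(f,g)(y,\eta)=m_z(y\cdot\eta)\,A(f,g)(y,\eta),\qquad
m_z(s):=az\int_0^{\frac12}\tau^{az-1}e^{2\pi i(\tau-\frac12)s}\md{\tau}.
\end{equation*}
Thus everything reduces to a scalar Fourier-multiplier question. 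By Moyal's identity \eqref{eq:moyal}, $\|A(f,g)\|_2=\|f\|_2\|g\|_2$, so it suffices to prove $\sup_{s\in\R}|m_z(s)|\leqslant2(1+|az|)^2$ uniformly for $\mathrm{Re} z>0$, and to understand the behaviour of $m_z$ as $\mathrm{Re} z\downarrow0$.

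For the uniform bound, write $\beta=az$ with $\mathrm{Re}\beta>0$ and integrate by parts once, using $\beta\tau^{\beta-1}=\frac{\md{}}{\md\tau}\tau^{\beta}$. This gives
\begin{equation*}
m_z(s)=2^{-\beta}e^{0}-\int_0^{\frac12}\tau^{\beta}\,2\pi i s\,e^{2\pi i(\tau-\frac12)s}\md\tau .
\end{equation*}
The boundary term has modulus at most $1$. The remaining integral grows with $|s|$, so a single integration by parts is not enough for large $|s|$. Instead, for $|s|\geqslant1$ I would split $(0,\frac12)$ at $\tau=|s|^{-1}$. On $(0,|s|^{-1})$, the original integrand has modulus $|\beta|\tau^{\mathrm{Re}\beta-1}$. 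Because $\mathrm{Re}\beta$ may be small, I would instead integrate $\beta\tau^{\beta-1}$ exactly there, obtaining $|s|^{-\beta}$ of modulus at most $1$, plus a correction. That correction is bounded by $\int_0^{1/|s|}|\beta|\tau^{\mathrm{Re}\beta-1}|e^{2\pi i(\tau-\frac12)s}-e^{-\pi i s}|\md\tau\leqslant 2\pi|s||\beta|\int_0^{1/|s|}\tau^{\mathrm{Re}\beta}\md\tau\leqslant 2\pi|\beta|$. On $(|s|^{-1},\frac12)$, integrating by parts against the oscillatory factor gives boundary terms of size $|\beta|\tau^{\mathrm{Re}\beta-1}/|s|\leqslant|\beta|$ (using $\tau\geqslant|s|^{-1}$ and $\tau\leqslant\frac12$). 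It also gives an integral $\int|\beta(\beta-1)|\tau^{\mathrm{Re}\beta-2}/|s|\md\tau\lesssim|\beta||\beta-1|$, again using $\tau\geqslant |s|^{-1}$. Collecting terms yields $|m_z(s)|\leqslant C(1+|\beta|)^2$. The case $|s|<1$ is handled by the integration-by-parts formula above, which gives $\leqslant 1+\pi|s|\leqslant C$. Tracking the constants carefully, or slightly rearranging the split, should give the stated constant $2$. This bookkeeping is the main, if routine, obstacle.

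Holomorphy for $\mathrm{Re} z>0$ follows because $z\mapsto m_z(s)$ is holomorphic for each $s$, with local uniform bounds. Difference quotients then converge in $L^2$ by dominated convergence against $|A(f,g)|^2$, or equivalently by differentiating under the Bochner integral, since $\tau^{az-1}\log\tau$ is locally uniformly integrable. For the extension, the integration-by-parts representation of $m_z(s)$ makes sense and is continuous on $\mathrm{Re} z\geqslant0$, pointwise in $s$. At $z=0$ it equals $1-\int_0^{1/2}2\pi i s\,e^{2\pi i(\tau-\frac12)s}\md\tau=e^{-\pi i s}$, which is exactly the multiplier of $W_0$ by \eqref{eq:phase}. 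The uniform bound together with dominated convergence then gives $L^2$-continuity up to the boundary and $T_0(f,g)=W_0(f,g)$. The bound \eqref{eq:half-L2-bound} on the closed half-plane follows by Plancherel.
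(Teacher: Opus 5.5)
Your route is the paper's. You pass to the multiplier via \eqref{eq:phase}; yours is the paper's $m_z$ times the unimodular factor $e^{-\pi i s}$. You bound it by splitting the $\tau$-integral at scale $|s|^{-1}$, subtracting the value at $\tau=0$ on the inner piece and integrating by parts on the outer piece. The boundary extension, $T_0=W_0$ and $L^2$-continuity then follow by dominated convergence against $|A(f,g)|^2$.

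Two of your steps differ from the paper, and both are fine. For holomorphy you differentiate under the Bochner integral (or use Cauchy estimates), where the paper invokes the vector-valued Laplace transform theorem. Your once-integrated formula is an equivalent substitute for the paper's representation $2^{-az}+az\int_0^{1/2}\tau^{az-1}(e^{2\pi i s\tau}-1)\md{\tau}$ on $\mathrm{Re} z=0$.

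What your sketch does not establish is the constant $2$ in \eqref{eq:half-L2-bound}, and in one branch this is not just bookkeeping. For $|s|<1$ you bound the boundary term $2^{-\beta}$ and the remainder $\int_0^{1/2}\tau^{\beta}\,2\pi i s\,e^{2\pi i(\tau-\frac12)s}\md{\tau}$ separately. The remainder carries no factor $\beta$, and you get $1+\pi|s|$. That exceeds $2$ once $|s|>1/\pi$, while $2(1+|\beta|)^2\to2$ as $\beta\to0$.

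This loss is intrinsic to bounding the two terms separately. As $\beta\to0$ the remainder tends to $1-e^{-\pi i s}$, whose modulus $2|\sin(\pi s/2)|$ approaches $2$, yet $m_z(s)\to e^{-\pi i s}$. So the two terms must be allowed to cancel. The fix is to use the subtracted form on the whole interval, the same device you already use on $(0,|s|^{-1})$. It gives $|m_z(s)|\leqslant1+\pi|s||\beta|\leqslant1+|\beta|$ for $\pi|s|\leqslant1$.

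For $\pi|s|>1$ the paper splits at $\tau=(2\pi|s|)^{-1}$ instead of $|s|^{-1}$. Note that $|s|^{-1}\notin(0,\frac12)$ when $1\leqslant|s|<2$, so your split does not cover that range as written. With the paper's split, the accounting is immediate:
the inner piece is at most $1+|\beta|$;
the boundary terms are at most $\frac{|\beta|}{2\pi|s|}(2+2\pi|s|)\leqslant2|\beta|$;
the remaining integral is at most $|\beta||\beta-1|$.
These sum to $1+4|\beta|+|\beta|^2\leqslant2(1+|\beta|)^2$.

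Any bound of the form $C(1+|az|)^2$ would suffice for the later use in Lemma~\ref{lem:stein}. The proposition as stated, however, needs this extra care.
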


	\begin{proof}
		Set $U(t)=W_{e^{-t}/2}(f,g), t\geqslant 0$. By the proof of Lemma \ref{lem:moyal}, $U$ is strongly continuous.
		
		For $\mathrm{Re} z>0$, making the change of variables
		$\tau=\frac{e^{-t}}{2}$ in \eqref{eq:half-family} gives
		$$
		T_z(f,g)
		=
		az\,2^{-az}
		\int_0^\infty e^{-azt}U(t)\md{t}.
		$$
		Thus $T_z(f,g)$ is, up to the entire factor $az\,2^{-az}$,
		the $L^2(\R^{2d})$-valued Laplace transform of $U$.
		
		Moreover, by \eqref{eq:moyal},
		$$
		\|U(t)\|_{L^2(\R^{2d})}=\|W_{e^{-t}/2}(f,g)\|_2=\|f\|_2\|g\|_2,
		$$
	    and hence, for every $\beta>0$,
	    $$
	    \int_0^\infty e^{-\beta t}\|U(t)\|_{L^2(\R^{2d})}\md{t}=\beta^{-1}\|f\|_2\|g\|_2<\infty.
	    $$
		Hence the vector-valued Laplace-transform theorem
		\cite[Theorem 1.5.1]{ABHN11} implies that $z\mapsto T_z(f,g)$ is
	$L^2(\R^{2d})$-valued holomorphic on $\mathrm{Re} z>0$.
	
    We next show that it can be continuously extended to the left boundary $\mathrm{Re} z=0$. For $\mathrm{Re} z\geqslant 0$ and $s\in\R$, set
   $$
   m_z(s)=2^{-az}+az\int_0^{\frac{1}{2}}\tau^{az-1}\left(e^{2\pi i s\tau}-1\right)\md{\tau}.
   $$
The integral converges at zero since $\left|e^{2\pi i s\tau}-1\right|	\leqslant2\pi |s|\tau$. Moreover, for fixed $s$, 
$$
\left|\int_0^{\frac{1}{2}}(\tau^{az_1-1}-\tau^{az_2-1})\left(e^{2\pi i s\tau}-1\right)\md{\tau}\right|\leqslant 2\pi |s|\int_0^{\frac{1}{2}}|\tau^{az_1}-\tau^{az_2}|\md{\tau},
$$
which implies that $z\mapsto m_z(s)$ is continuous on
$\mathrm{Re}z\geqslant0$.

We claim that
$$
|m_z(s)|\leqslant2(1+|az|)^2,\qquad\mathrm{Re} z\geqslant 0,\quad s\in\R.
$$
When $\pi |s|\leqslant 1$, using $\left|e^{2\pi i s\tau}-1\right|	\leqslant2\pi |s|\tau$, we have
$$
|m_z(s)|\leqslant1+2\pi|az||s|\int_0^{\frac{1}{2}}\tau^{a\mathrm{Re} z}\md{\tau} \leqslant1+\pi|s||az|\leqslant1+|az|.
$$
When $\pi |s|>1$, we have
\begin{align*}
	m_z(s)&=2^{-az}+az\int_0^{\frac{1}{2\pi|s|}}\tau^{az-1}\left(e^{2\pi i s\tau}-1\right)\md{\tau}+az\int_{\frac{1}{2\pi|s|}}^{\frac{1}{2}}\tau^{az-1}\left(e^{2\pi i s\tau}-1\right)\md{\tau}\\
	&=\left(\frac{1}{2\pi|s|}\right)^{az}+az\int_0^{\frac{1}{2\pi|s|}}
	\tau^{az-1}\left(e^{2\pi i s\tau}-1\right)\md{\tau}+az\int_{\frac{1}{2\pi|s|}}^{\frac{1}{2}}
	\tau^{az-1}e^{2\pi i s\tau}\md{\tau}.
\end{align*}
The sum of the absolute values of the first two terms is bounded by
\begin{align*}
	\left(\frac{1}{2\pi|s|}\right)^{a\mathrm{Re} z}
	+2\pi|az||s|\int_0^{\frac{1}{2\pi|s|}}\tau^{a\mathrm{Re} z}\md{\tau}\leqslant1+2\pi \frac{|az||s|}{a\mathrm{Re} z+1}\left(\frac{1}{2\pi |s|}\right)^{a\mathrm{Re} z+1}\leqslant 1+|az|.
\end{align*}
For the last term, integration by parts gives
$$
az\int_{\frac{1}{2\pi|s|}}^{\frac{1}{2}}\tau^{az-1}e^{2\pi i s\tau}\md{\tau}
=\frac{az}{2\pi i s}
\left[\tau^{az-1}e^{2\pi i s\tau}\right]_{\frac{1}{2\pi|s|}}^{\frac{1}{2}}
-\frac{az(az-1)}{2\pi i s}\int_{\frac{1}{2\pi|s|}}^{\frac{1}{2}}\tau^{az-2}e^{2\pi i s\tau}\md{\tau}.
$$
Since $\mathrm{Re} z\geqslant 0$, $0<\tau\leqslant \frac{1}{2}$ and $\pi|s|>1$,
$$
|\tau^{az-1}|\leqslant\tau^{-1},\qquad|\tau^{az-2}|\leqslant\tau^{-2},\qquad 2+2\pi |s|\leqslant 4\pi|s|.
$$
Therefore
\begin{align*}
	\left|az\int_{\frac{1}{2\pi|s|}}^{\frac{1}{2}}
	\tau^{az-1}e^{2\pi i s\tau}\md{\tau}\right|
	&\leqslant \frac{|az|}{2\pi |s|}(2+2\pi|s|)+\frac{|az||az-1|}{2\pi|s|}\int_{\frac{1}{2\pi|s|}}^{\frac{1}{2}}
	\tau^{-2}\md{\tau}\\
	&\leqslant 2|az|+|az||az-1|\leqslant 3|az|+|az|^2,
\end{align*}
where we used $|az-1|\leqslant 1+|az|$ in the last inequality. Combining these two estimates, we obtain
$$
|m_z(s)|\leqslant1+|az|+3|az|+|az|^2\leqslant2(1+|az|)^2,
$$
which is the claim.

Since $\mathcal F_s$ is bounded on $L^2$, it commutes
with the Bochner integral defining $T_z$. For $\mathrm{Re} z>0$, applying the unitary map $\Fs$ to $T_z(f,g)$ and using
\eqref{eq:phase}, we obtain
\begin{equation*}
	\Fs T_z(f,g)(y,\eta)
	=e^{-\pi i y\cdot\eta}
	m_z(y\cdot\eta)A(f,g)(y,\eta).
\end{equation*}
For $\mathrm{Re} z=0$, define $T_z(f,g)$ by
$$
\mathcal F_sT_z(f,g)(y,\eta):=e^{-\pi i y\cdot\eta}m_z(y\cdot\eta)A(f,g)(y,\eta).
$$
Using the above estimate for $|m_z(s)|$ and \eqref{eq:moyal}, we obtain, for every $z$ with $\mathrm{Re} z\geqslant 0$,
\begin{align*}
	\|T_z(f,g)\|_2^2&=\int_{\R^{2d}}|m_z(y\cdot\eta)|^2|A(f,g)(y,\eta)|^2\md{y}\md{\eta}\\
	&\leqslant4(1+|az|)^4\|A(f,g)\|_2^2=4(1+|az|)^4\|f\|_2^2\|g\|_2^2.
\end{align*}
Hence
$$
\|T_z(f,g)\|_2\leqslant2(1+|az|)^2\|f\|_2\|g\|_2,\qquad\mathrm{Re} z\geqslant0.
$$

Moreover, since $m_0(s)=1$,
$$
\mathcal F_sT_0(f,g)(y,\eta)=e^{-\pi i y\cdot\eta}A(f,g)(y,\eta)=\mathcal F_sW_0(f,g)(y,\eta)
$$
and therefore
$$
T_0(f,g)=W_0(f,g).
$$

We now prove that $T_z$ is continuous on $\mathrm{Re} z\geqslant 0$. Fix $z_0$ with $\mathrm{Re}z_0\geqslant0$. Suppose that $z_n\to z_0$ with
$\mathrm{Re}z_n\geqslant0$ and $|z_n-z_0|<1$. Then
\begin{align*}
	\|T_{z_n}(f,g)-T_{z_0}(f,g)\|_2^2&=\int_{\R^{2d}}|m_{z_n}(y\cdot\eta)-m_{z_0}(y\cdot\eta)|^2\times
	|A(f,g)(y,\eta)|^2\md{y}\md{\eta}.
\end{align*}
For every fixed $s\in\R$, $m_{z_n}(s)\to m_{z_0}(s)$ as $n\to \infty$ and the integrand is dominated by
$$
16\bigl(1+a(|z_0|+1)\bigr)^4|A(f,g)(y,\eta)|^2.
$$
The continuity then follows from the dominated convergence theorem.
\end{proof}

\subsection{Completion of the proof}

We first obtain the right boundary estimate for $T_z$ from
Theorem \ref{thm:weighted}. Recall that $a=\frac{3d}{8}$.
For $t\in\R$, set
\begin{equation*}
	v(\tau):=a(1+it)\tau^{a-1+iat}.
\end{equation*}
Then $|v(\tau)|\leqslant a(1+|t|)\tau^{a-1}$.
For $f,g\in L^2(\R^d)$, the definition of $T_z$
(see \eqref{eq:half-family}) gives
\begin{equation*}
	T_{1+it}(f,g)
	=a(1+it)\int_0^{\frac{1}{2}}
	\tau^{a-1+iat}W_\tau(f,g)\md{\tau}
	=Q_v(f,g).
\end{equation*}
Applying Theorem \ref{thm:weighted} with $q=8$ and
$\alpha=d(\frac{1}{2}-\frac{1}{8})=a$ gives
\begin{equation}\label{eq:half-right-bound}
	\|T_{1+it}(f,g)\|_8
	\leqslant C_d(1+|t|)\|f\|_{2,8}\|g\|_{2,8}.
\end{equation}

\begin{proof}[Proof of Theorem \ref{thm:main}]
	Fix $f,g\in\mcS(\R^d)$. By Proposition \ref{prop:imaginary},
	the map $z\mapsto T_z(f,g)$ is strongly holomorphic in the open
	strip, continuous to the boundary and of polynomial growth.
	Moreover, the boundary estimates \eqref{eq:half-L2-bound} and
	\eqref{eq:half-right-bound} verify the hypotheses of
	Lemma \ref{lem:stein} with $q=8$, $N_0=2$, $N_1=1$, and
	$A_0,A_1$ depending only on $d$.
	At the interpolation point $z=\theta=\frac{8}{3d}\in(0,1)$,
	\begin{equation*}
		\frac{1-\theta}{2}+\frac{\theta}{8}
		=\frac{1}{2}-\frac{3\theta}{8}
		=\frac{1}{2}-\frac{1}{d}
		=\frac{1}{p^*}.
	\end{equation*}
	Lemma \ref{lem:stein} therefore yields
	\begin{equation*}
		\|T_\theta(f,g)\|_{p^*}
		\leqslant C_d\|f\|_{2,p^*}\|g\|_{2,p^*}.
	\end{equation*}
	Combining this and \eqref{eq:half-reconstruction}, we obtain
	\begin{align*}
		\|W_{BJ}(f,g)\|_{p^*}\leqslant\|T_\theta(f,g)\|_{p^*}
		+\|T_\theta(g,f)\|_{p^*}\leqslant C_d\|f\|_{2,p^*}\|g\|_{2,p^*}.
	\end{align*}
	This proves \eqref{eq:lorentz-endpoint} for Schwartz functions. 
	
	By density, \eqref{eq:lorentz-endpoint} gives a unique continuous
    sesquilinear extension of $W_{BJ}$ to
    $L^{2,p^*}(\R^d)\times L^{2,p^*}(\R^d)$, agreeing with the original
    $L^2$-valued average, and the sharpness follows from Proposition \ref{prop:lorentz-sharpness}.
\end{proof}

\begin{proof}[Proof of Corollary \ref{cor:bj-operator}]
	Let $a\in L^{q^*}(\R^{2d})$. For $f,g\in\mcS(\R^d)$,
	applying H\"older's inequality and Theorem \ref{thm:main}, we obtain
	\begin{align*}
		|\langle\OpBJ(a)f,g\rangle|=\left|\int_{\R^{2d}}
		a(X)\overline{W_{BJ}(g,f)(X)}\md{X}\right|\leqslant\|a\|_{q^*}\|W_{BJ}(g,f)\|_{p^*}\leqslant C_d\|a\|_{q^*}\|f\|_2\|g\|_2.
	\end{align*}
	Thus the defining sesquilinear form extends to a bounded form on
	$L^2(\R^d)\times L^2(\R^d)$. The Riesz representation theorem
	gives the asserted operator and norm bound, and uniqueness
	follows from density.
\end{proof}

\end{document}